\documentclass[times,doublespace]{amsart}

\usepackage{graphicx}
\usepackage{amssymb}
\usepackage{epstopdf}

\usepackage[a4paper, total={6in, 9in}]{geometry}

\usepackage{latexsym,amsmath,amsfonts,amsthm,mathrsfs,amssymb,cite}
\usepackage[usenames]{color}
\usepackage{graphicx,epsfig}
\usepackage{amscd}
\usepackage{amsbsy}
\usepackage{bm}
\usepackage{color}
\usepackage{cite}

\newtheorem{thm}{Theorem}[section]

\newtheorem{lem}{Lemma}[section]

\theoremstyle{definition}
\newtheorem{defn}{Definition}[section]

\theoremstyle{remark}

\numberwithin{equation}{section}

\numberwithin{equation}{section}

\newcounter{saveeqn}


\newcommand{\eqnref}[1]{(\ref {#1})}

\newcommand{\Bb}{\mathbf{b}}

\newcommand{\Bx}{\mathbf{x}}
\newcommand{\By}{\mathbf{y}}

\newcommand{\Gl}{\lambda}

\newcommand{\Ocal}{\mathcal{O}}

\newcommand{\RR}{\mathbb{R}}

\newcommand{\beq}{\begin{equation}}
\newcommand{\eeq}{\end{equation}}

\DeclareMathAlphabet{\itbf}{OML}{cmm}{b}{it}

\title[Plasmon modes in layered structures]{On plasmon modes in multi-layer structures}

\author{Xiaoping Fang}
\address{School of Mathematics and Statistics, Hunan University of Technology and Business, Changsha 410205, China; Key Laboratory of Hunan Province for Statistical Learning and Intelligent Computation, Changsha  410205, China; National Key Laboratory of Data Intelligence and Smart Society, Changsha  410205, China}
\email{fxpmath@hutb.edu.cn, fxp1222@163.com}

\author{Youjun Deng$^*$}
\address{School of Mathematics and Statistics, Central South University, Changsha 410083, China}
\email{youjundeng@csu.edu.cn, dengyijun\_001@163.com}
\thanks{$^*$ Corresponding author: youjundeng@csu.edu.cn, dengyijun\_001@163.com}

\date{} 
\begin{document}
\maketitle

\begin{abstract}
In this paper, we consider the plasmon resonance in multi-layer structures. We show that the plasmon mode is equivalent to the eigenvalue problem of a matrix, whose order is the same to the number of layers. For any number of layers, the exact characteristic polynomial is derived by a conjecture and is verified by using induction. It is shown that all the roots to the characteristic polynomial are real and exist in the span $[-1, 2]$, when the background field is uniform in $\RR^3$. Numerical examples are presented for finding all the plasmon modes, and it is surprisingly to find out that such multi-layer structures may induce so called surface-plasmon-resonance-like band.

\noindent{\bf Keywords:}~~ plasmon resonance, plasmon modes, multi-layer structure, characteristic polynomial

\noindent{\bf 2020 Mathematics Subject Classification:}~~ 35J05, 35P15
\end{abstract}

\maketitle

\section{Introduction}
Recently, there have been increasingly great deal of interest in mathematical theory of surface localized resonance (SLR), due to its fundamental basis of many cutting-edge applications such as invisibility cloaking, near-field microscopy, molecular recognition, nano-lithography, and so on, see, e.g., \cite{ACKLM1,ACKLM2,ACKLM3,ADKLMZ,BL2002,BLBCL2006,BS11,CKKL7,HF04,LL15,LLL9,LLL16,Pe99,WN10} and references there in. SLR structure is usually constructed by high contrast material compared with ambient medium, or noble metallic material which may exhibit negative property at some special occasions. Plasmon resonance is the resonant oscillation of conduction electrons at the interface between negative and positive permittivity material caused by the background field.

It is well known that plasmon resonance depends highly on the material structure. In \cite{ADKLMZ}, the authors studied the plasmon resonance for multiple well-separated nanoparticles under the physical model called Drude model, which describes the dependence of material parameter on the frequency of incident wave. Core-shell structures are widely used for analysis of cloaking due to anomalous localized resonance \cite{ACKLM1, ACKLM2, ACKLM3,BS11,CKKL7,WN10}. Plasmon resonance for spherical structure with normal scale has been studied in these years \cite{FDC19,LL15}. It is noted that the above structures are all isotropic material structures. In \cite{DLZ21,DLZ22,RS}, plasmon resonance with anisotropic material structures (nanorod, slender-body) are studied and some sophisticated observations have been investigated. In most of the aforementioned works on plasmon resonance, the spectral of so called Nuemann-Poincar\'e (N-P) type operators are deeply studied, since it may cause the breaking of the invertibility of an integral system derived from related physical partial differential equation. Each eigenvalue is associated with one type of plasmon mode.

We mention that most mathematical formulation of plasmonic structures are structures with homogeneous material parameters. In \cite{FDL15}, the authors studied the plasmon resonance and its heat generation effect of a four-layer structure. As far as we know, this is the first time to mathematically study the plasmon resonance in a piecewise constant material structure. However, the results in \cite{FDL15} can not be generalised to plasmon resonance in any multi-layer structures, which is a quite challenging problem. The focus of this paper is to present a precisely understanding on plasmon resonance in any multi-layer structures. We simply use the conductivity problem for analysis breach, while more sophisticated electro-magnetic system shall be considered in forth coming works. We shall first derive the exact perturbed field in terms of a matrix which contains the material information, together with the structure information. This matrix plays the role of the integral operator in homogeneous material structure case. The explicit formula for the determinant of the matrix, which is equivalent to the characteristic polynomial of another matrix that does not depend on the material parameter, is a stumbling block in the analysis of plasmon modes. After struggling against complicated structure of the matrix, we come up with a conjecture of the exact formulation of characteristic polynomial. This formulation shows that the eigenvalues can be divided by pairs which add up to one. The conjecture is then verified by induction. Besides, the roots of the characteristic polynomial are all real and belong to the span $[-1, 2]$ for uniform background field in $\RR^3$. Such breaking through opens a wide way to analysis of localized resonance in multi-layer structures.

The organization of this paper is as follows. In section 2, we present the mathematical formulation of plasmon resonance in conductivity problem with multi-layer structures.
Some main results on the formulation of characteristic polynomial and its estimation of roots are exposed. We show some similar results for two dimensional case in section 3.
In section 4, numerical examples are presented in finding all the plasmon modes in a fixed multi-layer structure and plasmon resonance is simulated by using physical Drude model in $\RR^3$. Some conclusions are made in section 5. Section 6 is devoted to the proofs of the main results in section 2.

\section{Mathematical formulation and main results}
Consider the conductivity equation
\beq\label{eq:mainmd01}
\left\{
\begin{array}{ll}
\nabla\cdot \varepsilon\nabla u =0, & \mbox{in} \quad \RR^3\\
u-H=\Ocal(|\Bx|^{-1}), & |\Bx|\rightarrow \infty,
\end{array}
\right.
\eeq
where $H$ is the background field which satisfies $\Delta H=0$ in $\RR^3$. The parameter $\varepsilon$ denotes the conductivity which is given by
\beq\label{eq:permidf01}
\varepsilon(\Bx)=\varepsilon_c(\Bx)\chi(D)+\varepsilon_0\chi(\RR^3\setminus\overline{D}),
\eeq
where $D$ is a multi-layer structure and $\chi$ denotes the characteristic function. The values of $\varepsilon_c(\Bx)$ in $D$ are piecewise constants, depending on the number of the layers given. In FIGURE \ref{fig:1}, a six-layer structure is presented.
\begin{figure}[!h]
   \begin{center}
{\includegraphics[width=5in]{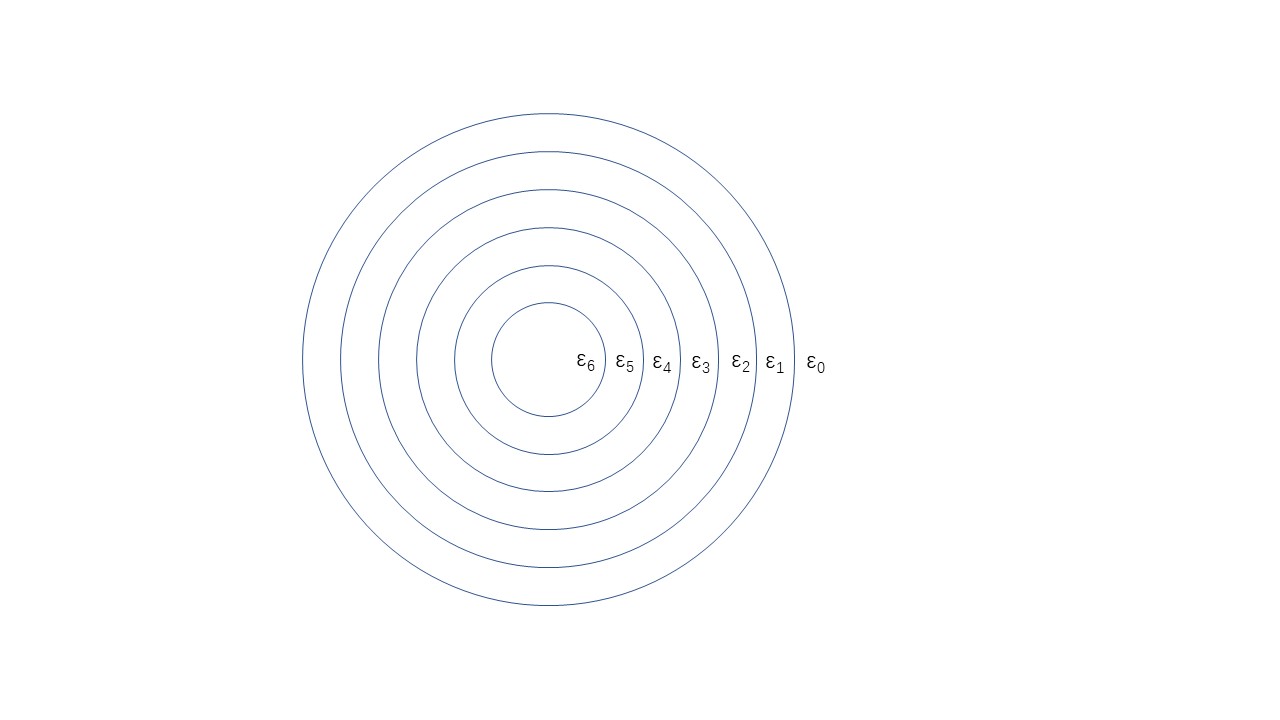}}
   \end{center}
    \caption{Schematic of a six-layer structure.
       }\label{fig:1}
\end{figure}
In general, define the $N$-layer structure by
\beq
\label{eq:aj}
A_{0}:=\{r>r_{1}\}, \quad A_j:=\{r_{j+1}<r\leq r_{j}\}, \quad  j=1,2,\ldots, N-1 \quad A_N:=\{r\leq r_N\},
\eeq
where $N\in \mathbb{N}$.
Assume that
\beq\label{eq:paracho01}
\varepsilon_c(\Bx)=\varepsilon_j, \quad \Bx\in A_j, \quad j=1, 2, \ldots, N.
\eeq
Suppose that the background field is uniformly distributed, that is, the field $H$ can be represented by
\beq\label{eq:defH01}
H=r\sum_{m=-1}^{1}a_{0,m}Y_1^m.
\eeq
By the symmetric properties of the multi-layer structure, we suppose
the total electric potential $u$ has the form
\beq\label{layerstr01}
u=\left\{
\begin{split}
&r\sum_{m=-1}^{1}a_{N,m}Y_1^m, \quad \Bx\in A_N,\\
&r\sum_{m=-1}^{1}a_{j,m}Y_1^m + r^{-2}\sum_{m=-1}^{1}b_{j,m}Y_1^m, \quad \Bx\in A_j,\quad j=N-1, N-2, \ldots , 1\\
&r\sum_{m=-1}^{1}a_{0,m}Y_1^m+ r^{-2}\sum_{m=-1}^{1}b_{0,m}Y_1^m, \quad \Bx\in A_{0}
\end{split}
\right.
\eeq
In what follows, we define
  \beq\label{eq:deflamb01}
 \Gl_{j}=\frac{2\varepsilon_{j-1}+\varepsilon_{j}}{\varepsilon_{j-1}-\varepsilon_{j}}, \quad j=1,2,\ldots,N
 \eeq
The following theorem shows the formula of the perturbed field $u-H$ outside the multi-layer structure.
\begin{thm}\label{th:solmain01}
Suppose $u$ is the solution to \eqnref{eq:mainmd01}, with the parameter $\varepsilon$ given by piecewise constant values in \eqnref{eq:paracho01}, which are positive real numbers. Suppose that $H$ is given by \eqnref{eq:defH01}. Then there holds
\beq\label{eq:purbmn01}
u-H=r^{-3}H\itbf{e}^T \Upsilon_{N} (P_{N}^T)^{-1}\itbf{e},
\eeq
where $\itbf{e}:=(1,1,\ldots,1)^T$, the matrix $P_{N}$ and $\Upsilon_{N}$ are given by
 \beq\label{eq:matP01}
 P_{N}:= \begin{bmatrix}
 \Gl_1 & -1 & -1 & \cdots& -1 \\
2(r_{2}/r_1)^3 & \Gl_{2} & -1 &\cdots & -1 \\
\vdots & \vdots & \vdots & \ddots & \vdots\\
2(r_{N-1}/r_{1})^3 & 2(r_{N-1}/r_{2})^3 & 2(r_{N-1}/r_{3})^3 & \cdots & -1 \\
2(r_{N}/r_1)^3 & 2(r_{N}/r_2)^3 & 2(r_{N}/r_{3})^3 &\cdots & \Gl_{N}
 \end{bmatrix}
 \eeq
and
\beq\label{eq:matU01}
 \Upsilon_{N}:= \begin{bmatrix}
 r_1^3 & 0 & 0 &\cdots& 0 \\
0 & r_{2}^3 & 0 &\cdots& 0 \\
0 & 0 & r_{3}^3 &\cdots& 0 \\
\vdots & \vdots & \vdots & \ddots & \vdots\\
0 & 0 & 0 &\cdots & r_{N}^3
 \end{bmatrix} .
\eeq
\end{thm}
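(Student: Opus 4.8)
The plan is to build the solution by hand as a superposition of elementary single–interface fields, and then to recognise the resulting $N\times N$ algebraic system as the one governed by $P_N$ and $\Upsilon_N$. First I would reduce to a single spherical harmonic: since the conductivity in \eqref{eq:permidf01}--\eqref{eq:paracho01} depends only on $r$, the transmission problem preserves the spherical–harmonic degree, and because $H$ in \eqref{eq:defH01} is a pure degree–one harmonic the solution is degree one in every layer. The only admissible radial profiles are then $rY_1^m$ and $r^{-2}Y_1^m$, which is exactly the ansatz \eqref{layerstr01} (no singular part in the core $A_N$ by regularity at the origin, the regular part in $A_0$ fixed to $H$ by the behaviour at infinity). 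Positivity of the $\varepsilon_j$ makes the problem uniquely solvable, so it suffices to exhibit one solution of this form.

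Next, fixing a mode $m$, writing $a:=a_{0,m}$ and $t_j:=r_j^3$, I would seek $u=H+\sum_{k=1}^N\phi_k$ where each $\phi_k$ is the degree–one harmonic field singular only across the sphere $r=r_k$,
\[
\phi_k=\begin{cases} c_k\,r^{-2}Y_1^m, & r\ge r_k,\\ (c_k/t_k)\,r\,Y_1^m, & r\le r_k,\end{cases}
\]
which is continuous at $r_k$ (as $c_kr_k^{-2}=(c_k/t_k)r_k$), harmonic off $r_k$, decaying at infinity and regular at the origin. With this choice, continuity of $u$ across every interface, the behaviour at infinity and regularity at the core all hold automatically, so the only conditions left to impose are the current–continuity relations across each $r_i$. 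Reading off the coefficients in $A_{i-1}$ (and analogously in $A_i$) gives $a_{i-1}=a+\sum_{k<i}c_k/t_k$ and $b_{i-1}=\sum_{k\ge i}c_k$; in particular $b_0=\sum_{k=1}^N c_k=\itbf{e}^T c$ with $c=(c_1,\dots,c_N)^T$.

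Imposing $\varepsilon_{i-1}(a_{i-1}-2b_{i-1}/t_i)=\varepsilon_i(a_i-2b_i/t_i)$, substituting these coefficients, dividing by $\varepsilon_{i-1}-\varepsilon_i$ and using the identity $\Gl_i-2=3\varepsilon_i/(\varepsilon_{i-1}-\varepsilon_i)$ so that the $c_i$–contributions collapse to $\Gl_i$, I expect the flux conditions to reduce to
\[
\Gl_i\,c_i-\sum_{k<i}\frac{t_i}{t_k}\,c_k+2\sum_{k>i}c_k=a\,t_i,\qquad i=1,\dots,N.
\]
The crucial observation is that this system is precisely $\Upsilon_N P_N^T\Upsilon_N^{-1}c=a\,\Upsilon_N\itbf{e}$, because conjugating $P_N^T$ by the diagonal matrix $\Upsilon_N$ sends its super-diagonal entries $2t_k/t_i$ to $2$ and its sub-diagonal entries $-1$ to $-t_i/t_k$. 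Hence $c=a\,\Upsilon_N(P_N^T)^{-1}\itbf{e}$, so that $b_0=\itbf{e}^T c=a\,\itbf{e}^T\Upsilon_N(P_N^T)^{-1}\itbf{e}$; restoring the sum over $m$ and using $u-H=r^{-2}\sum_m b_{0,m}Y_1^m$ with $b_{0,m}/a_{0,m}=\itbf{e}^T\Upsilon_N(P_N^T)^{-1}\itbf{e}$ for every $m$ yields $u-H=r^{-3}H\,\itbf{e}^T\Upsilon_N(P_N^T)^{-1}\itbf{e}$, which is \eqref{eq:purbmn01}.

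The main obstacle is exactly this last identification: the algebraic collapse of the interface relations into the displayed system, and in particular the recognition that the long-range sub-diagonal weights $2(r_i/r_k)^3$ of $P_N$ are generated by the diagonal conjugation $\Upsilon_N(\cdot)\Upsilon_N^{-1}$. A direct layer-by-layer transfer-matrix elimination would instead produce a merely tridiagonal system, which one would then have to reconcile with the bilinear form of $P_N$; it is the single-interface superposition that makes the long-range structure of $P_N$ appear directly, so choosing that ansatz is the key, non-obvious step.
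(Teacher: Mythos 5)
Your proof is correct and follows essentially the same route as the paper's: the paper likewise imposes the transmission conditions at each $r_j$ and rearranges them into the linear system with matrix $P_N^T$ for the difference variables $a_{j,m}-a_{j-1,m}$, which are exactly your $c_k/t_k$, then recovers $b_{0,m}=a_{0,m}\,\itbf{e}^T\Upsilon_N(P_N^T)^{-1}\itbf{e}$ (via the triangular matrix $\Xi$, whose role your identity $b_{i-1}=\sum_{k\ge i}c_k$ plays). Your single-interface superposition and the conjugation by $\Upsilon_N$ amount to the diagonal change of variables $c_k=t_k(a_{k,m}-a_{k-1,m})$, a repackaging of the same elimination that makes continuity automatic but yields the identical system and conclusion.
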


We mention that in general the permittivity $\varepsilon_c$ are positive valued, and thus the matrix $P_{N}$ is invertible. However, in resonance modes, it contains negative values in the multi-layer structure, that is $\varepsilon_j<0$ holds for some $j=1, 2, \ldots, N$. We shall give a precise connection on plasmon resonance and the choice of parameters $\varepsilon_c$ in the multi-layer structure. It can be readily seen that the plasmon modes occur when the matrix $P_N$ satisfy
$|P_N|=0$. One of our aims in this paper is to derive the exact formula for the determinant of $P_N$.
First, we define
 \beq\label{eq:matP02}
 P^i_{M}:= \begin{bmatrix}
\Gl_i & -1 & -1 & \cdots& -1 \\
2(r_{i+1}/r_i)^3 & \Gl_{i+1} & -1 &\cdots & -1 \\
\vdots & \vdots & \vdots & \ddots & \vdots\\
2(r_{M-1}/r_i)^3 & 2(r_{M-1}/r_{i+1})^3 & 2(r_{M-1}/r_{i+2})^3 & \cdots & -1 \\
2(r_{M}/r_i)^3 & 2(r_{M}/r_{i+1})^3 & 2(r_{M}/r_{i+2})^3 &\cdots & \Gl_{M}
 \end{bmatrix}
 .
 \eeq
We also set $P^3_2=1$ in what follows. To simplify the notation we define $t^i_{j}:=(r_{j}/r_{i})^3$, $i, j=1, 2, \ldots N$.  We present an elementary result as follows.
\begin{lem}\label{le:main01}
Suppose $N\geq 4$. The there holds the following recursive relation:
\beq\label{eq:deter0102}
\begin{split}
|P_{N}|=&(\Gl_1+(\Gl_2-1)t^1_{2})(\Gl_{N}+(\Gl_{N-1}-1)t^{N-1}_{N})|P^2_{N-1}| \\
&-(\Gl_1+(\Gl_2-1)t^1_{2})(\Gl_{N-1}+1)(\Gl_{N-1}-2)t^{N-1}_{N}|P^2_{N-2}|\\
&-(\Gl_{2}-2)t^1_{2}(\Gl_{2}+1)(\Gl_{N}+(\Gl_{N-1}-1)t^{N-1}_{N})|P^3_{N-1}|\\
&+(\Gl_{2}-2)t^1_{2}(\Gl_{2}+1)(\Gl_{N-1}+1)(\Gl_{N-1}-2)t^{N-1}_{N}|P^3_{N-2}|.
\end{split}
\eeq
\end{lem}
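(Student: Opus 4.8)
The plan is to reduce the four-term relation \eqnref{eq:deter0102} to two elementary \emph{single-step} recursions and then compose them. The crucial structural observation is that in every $P^i_M$ the strictly-upper-triangular part is constant (all entries equal $-1$), while the strictly-lower-triangular part is rank one: the $(p,q)$ entry with $p>q$ equals $2(r_p/r_q)^3 = 2\,r_p^3\cdot r_q^{-3}$, which factors as a function of $p$ times a function of $q$. This separability is exactly what lets a single column (or row) collapse to only two nonzero entries after one elimination step. First I would prove a \emph{front} recursion
\[
|P^i_M| = \bigl(\Gl_i + (\Gl_{i+1}-1)t^i_{i+1}\bigr)\,|P^{i+1}_M| - (\Gl_{i+1}+1)(\Gl_{i+1}-2)t^i_{i+1}\,|P^{i+2}_M|.
\]
To obtain it, replace the first column of $P^i_M$ by (first column) $-\,t^i_{i+1}\times$(second column). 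Since $t^i_{i+1}=(r_{i+1}/r_i)^3$ is precisely the common ratio of the lower entries of these two columns, all entries below the second row are annihilated and the first column becomes $(\Gl_i+t^i_{i+1},\ t^i_{i+1}(2-\Gl_{i+1}),0,\dots,0)^T$. Expanding along this column yields $|P^{i+1}_M|$ from the $(1,1)$ cofactor, while the $(2,1)$ cofactor is the determinant of a matrix agreeing with $P^{i+1}_M$ except that its first row has been replaced by $(-1,\dots,-1)$; a further one-row expansion identifies that minor as $|P^{i+1}_M|-(\Gl_{i+1}+1)|P^{i+2}_M|$. Collecting terms produces the displayed coefficients.

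Next I would prove the mirror-image \emph{tail} recursion
\[
|P^i_M| = \bigl(\Gl_M + (\Gl_{M-1}-1)t^{M-1}_M\bigr)\,|P^i_{M-1}| - (\Gl_{M-1}+1)(\Gl_{M-1}-2)t^{M-1}_M\,|P^i_{M-2}|
\]
by the symmetric argument on the rows: replace the last row of $P^i_M$ by (last row) $-\,t^{M-1}_M\times$(second-to-last row), which kills every entry to the left of column $M-1$ (here $t^{M-1}_M=(r_M/r_{M-1})^3$ is the relevant ratio), and expand along that row. The off-diagonal minor is again resolved by one secondary column expansion into $|P^i_{M-1}|-(\Gl_{M-1}+1)|P^i_{M-2}|$, giving the stated coefficients.

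Finally I would compose the two. Applying the front recursion once at $i=1$ gives $|P_N|=\alpha\,|P^2_N|-\beta\,|P^3_N|$ with $\alpha=\Gl_1+(\Gl_2-1)t^1_2$ and $\beta=(\Gl_2+1)(\Gl_2-2)t^1_2$; applying the tail recursion at $M=N$ to each of $|P^2_N|$ and $|P^3_N|$ rewrites them through $|P^2_{N-1}|,|P^2_{N-2}|$ and $|P^3_{N-1}|,|P^3_{N-2}|$ with the common factors $\gamma=\Gl_N+(\Gl_{N-1}-1)t^{N-1}_N$ and $\delta=(\Gl_{N-1}+1)(\Gl_{N-1}-2)t^{N-1}_N$. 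Substituting and multiplying out reproduces \eqnref{eq:deter0102} term by term. The hypothesis $N\ge 4$ is exactly what makes all four sub-blocks well defined; the boundary case ($P^3_N$ of size two when $N=4$, producing $P^3_3$ and $P^3_2$) is checked directly and is consistent with the stated convention $P^3_2=1$. I expect the main obstacle to be the handling of the two off-diagonal minors that arise after the first cofactor expansion: they are not themselves matrices of the form $P^i_M$, and the argument hinges on recognizing that each differs from a genuine $P$-block in a single row (resp. column) and so is resolved by one additional expansion. Pinning down the proportionality constants $t^i_{i+1}$ and $t^{M-1}_M$ so that the elimination is \emph{exact}—not merely triangularizing—is the one place where the rank-one lower structure must be used precisely.
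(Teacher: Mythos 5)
Your proposal is correct and follows essentially the same route as the paper: the paper likewise eliminates the bordering entries via elementary row/column operations with the exact factors $t^1_2$ and $t^{N-1}_N$ (using $t^i_j t^j_k = t^i_k$, which is precisely your rank-one separability observation) and then expands the resulting doubly bordered determinant. The only difference is organizational — you factor the four-term identity into a front and a tail two-term recursion and compose them, whereas the paper performs both end eliminations simultaneously and reads off all four terms at once; the product structure of the coefficients, e.g. $(\Gl_1+(\Gl_2-1)t^1_2)$ pairing with $(\Gl_{N-1}+1)(\Gl_{N-1}-2)t^{N-1}_N$, makes the two presentations transparently equivalent.
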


\subsection{Eigenvalue problem}
It is known that plasmon resonance is usually associated with some eigenvalue problem generated by the PDE system. We shall also explore the related eigenvalue problem for multi-layer structure. To simplify the analysis, we suppose that
\beq\label{eq:vepdef01}
\varepsilon_i
=\left\{
\begin{array}{ll}
-\varepsilon^*+\mathrm{i}\delta, & i \quad \mbox{is odd},\\
\varepsilon_0, & i \quad \mbox{is even},
\end{array}
\right.
\eeq
where $\varepsilon^*$ is a positive number to be chosen and $\delta$ is some small parameter which can be treated as a lossy parameter. $\mathrm{i}=\sqrt{-1}$.
In this setup, one can readily obtain that
$$
\Gl_i=
\left\{
\begin{array}{ll}
\Gl, & i \quad \mbox{is odd},\\
1-\Gl, & i \quad \mbox{is even},
\end{array}
\right.
$$
where
\beq\label{eq:deflamb01}
 \Gl=\frac{2\varepsilon_{0}-\varepsilon^*+\mathrm{i}\delta}{\varepsilon_{0}+\varepsilon^*-\mathrm{i}\delta}, \quad j=1,2,\ldots, N.
\eeq
Then \eqnref{eq:purbmn01} can be rewritten by
\beq\label{eq:eigenspan01}
u-H=r^{-3}H\itbf{e}^T \Upsilon_{N} (\Gl I-K_{N}^T)^{-1} \tilde{\itbf{e}},
\eeq
where $\tilde{\itbf{e}}:=(1,-1, 1, \dots, (-1)^{N-1})^T$ and the matrix $K_N$ is given by
\beq
 K_{N}= \begin{bmatrix}
0 & 1 & -1 & \cdots& (-1)^{N} \\
-2(r_{2}/r_1)^3 & 1 & -1 &\cdots & (-1)^{N} \\
\vdots & \vdots & \vdots & \ddots & \vdots\\
-2(r_{N-1}/r_{1})^3 & 2(r_{N-1}/r_{2})^3 & -2(r_{N-1}/r_{3})^3 & \cdots & (-1)^{N} \\
-2(r_{N}/r_1)^3 & 2(r_{N}/r_2)^3 & -2(r_{N}/r_{3})^3 &\cdots & (1+(-1)^{N})/2
 \end{bmatrix}
.
\eeq
The plasmon resonance mode is parallel to the eigenvalue problem of the matrix $K_N$. In fact, similar to \cite{DLZ21}, we can define the plasmon resonance as follows,
\begin{defn}\label{df:def01}
Consider the system \eqnref{eq:mainmd01} associated with the $N$-layer structure $D$, where the material configuration is described in \eqnref{eq:vepdef01}.
Then plasmon resonance occurs if the following condition is fulfilled:
$$
\lim_{\delta\rightarrow 0} \|\nabla (u-H)\|_{L^2(\RR^3\setminus\overline{D})}=\infty.
$$
\end{defn}
According to Definition \ref{df:def01}, one can readily show that the plasmon modes are the configurations which make the parameter $\Gl$ the eigenvalue of the matrix $K_N$ as $\delta\rightarrow 0$. Thus in what follows, we shall focus on the eigenvalues of $K_N$, or the determinant of $P_N$. To this end, we shall consider the dependence of the determinant $|P^i_m|$, $i\leq m$, on $\Gl$.
First by direct computations one has
\beq\label{eq:rec0101}
\begin{split}
|P^m_{m+1}(\Gl)|=&-(\Gl^2 -\Gl)+2t^{m}_{m+1}, \\
|P^{2m-1}_{2m+1}(\Gl)|=&\left(-(\Gl^2-\Gl)+(2t^{2m-1}_{2m}+2t^{2m}_{2m+1}-2t^{2m-1}_{2m+1})\right)\Gl,\\
|P^{2m}_{2m+2}(\Gl)|=&\Gl^3-2\Gl^2+(2t^{2m}_{2m+2}-2t^{2m}_{2m+1}-2t^{2m+1}_{2m+2}+1)\Gl+2t^{2m}_{2m+1}+2t^{2m+1}_{2m+2}-2t^{2m}_{2m+2}\\
 =& \left(-((1-\Gl)^2-(1-\Gl))+(2t^{2m}_{2m+1}+2t^{2m+1}_{2m+2}-2t^{2m}_{2m+2})\right)(1-\Gl), \quad m\geq 1.
\end{split}
\eeq
With the recursive formula \eqnref{eq:deter0102} and \eqnref{eq:rec0101}, one then has
\beq
\begin{split}
|P_{4}(\Gl)|=&\Gl(1-t^1_{2})(1-\Gl)(1-t^{3}_{4})(\Gl(1-\Gl)+2t^{2}_{3}) -\Gl(1-t^1_{2})(\Gl+1)(\Gl-2)t^{3}_{4}(1-\Gl)\\
&-(\Gl+1)t^1_{2}(\Gl-2)(1-\Gl)(1-t^{3}_{4})\Gl+(\Gl+1)t^1_{2}(\Gl-2)(\Gl+1)(\Gl-2)t^{3}_{4}\\
=&(\Gl^2-\Gl)^2+2(\Gl^2-\Gl)\sum_{(i,j)\in C_4^{0,2}}(-1)^{i+j}t^i_j+4t^1_2t^3_4,
\end{split}
\eeq
where $C^{i,m}_{n}$ denote the set of all combinations of $m$ out $n$, $m\leq n$. For one combination $(i_1, i_2, \ldots, i_{m})\in C^{i,m}_{n}$, $(i+1)\leq i_1,i_2,\ldots i_{m}\leq (n+i)$, we are arranging the order in an ascending way, that is, $i_1<i_2<\cdots< i_{m}$.
Then, in a similar manner, one can show that
\beq
|P_{5}(\Gl)|=\Gl\left((\Gl^2-\Gl)^2+2(\Gl^2-\Gl)\sum_{(i,j)\in C_5^{0,2}}(-1)^{i+j}t^i_j+4\sum_{(i,j,k,l)\in C_5^{0,4}}\tau_{(i,j,k,l)}t^i_jt^k_l\right),
\eeq
where $\tau_{(i,j,k,l)}:=(-1)^{i+j+k+l}$.
Similarly,
\beq
|P_{6}(\Gl)|=-(\Gl^2-\Gl)^3-2(\Gl^2-\Gl)^2\sum_{(i,j)\in C_6^{0,2}}(-1)^{i+j}t^i_j-4(\Gl^2-\Gl)\sum_{(i,j,k,l)\in C_6^{0,4}}\tau_{(i,j,k,l)}t^i_jt^k_l+8t^1_2t^3_4t^5_6.
\eeq
For the sake of simplicity, in what follows, we denote by $\mathbf{i}_{k}$ the multi-index $(i_1,i_2,\ldots,i_{k})$ and $\tau_{\mathbf{i}_{k}}$ its sign given by
\beq
\tau_{\mathbf{i}_{k}}:=(-1)^{\sum_{j=1}^{k} i_j}.
\eeq
By combining the above results, we have following conjecture:
\begin{thm}\label{th:ellresult01}
Define $$q(\lambda):=\Gl^2-\Gl.$$
Then for $N\in \mathbb{N}$, there hold that
\beq\label{eq:maincong01}
|P_N(\Gl)|=
\left\{
\begin{split}
&(-1)^L\left(\sum_{k=0}^L 2^k q(\lambda)^{L-k}\left(\sum_{\mathbf{i}_{2k}\in C_{N}^{0,2k}}\tau_{\mathbf{i}_{2k}}\prod_{l=1}^k t^{i_{2l-1}}_{i_{2l}}\right)\right), &  N=2L,  \\
&\lambda(-1)^L\left(\sum_{k=0}^L 2^k q(\lambda)^{L-k}\left(\sum_{\mathbf{i}_{2k}\in C_{N}^{0,2k}}\tau_{\mathbf{i}_{2k}}\prod_{l=1}^k t^{i_{2l-1}}_{i_{2l}}\right)\right), & N=2L+1.
\end{split}
\right.
\eeq
\end{thm}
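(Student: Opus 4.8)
Write $q:=q(\Gl)=\Gl^2-\Gl$ throughout. The plan is to prove, by strong induction on the size $n:=m-i+1$ of the sub-block $P^i_m$, the unified formula
\begin{equation*}
|P^i_m(\Gl)|=
\begin{cases}
(-1)^{L}\sum_{k=0}^{L}2^k q^{\,L-k}\sigma^{i,m}_{2k}, & n=2L,\\
\Gl_i(-1)^{L}\sum_{k=0}^{L}2^k q^{\,L-k}\sigma^{i,m}_{2k}, & n=2L+1,
\end{cases}
\end{equation*}
where $\sigma^{i,m}_{2k}:=\sum_{(i_1<\cdots<i_{2k})\subseteq\{i,\dots,m\}}\tau_{\mathbf{i}_{2k}}\prod_{l=1}^{k}t^{i_{2l-1}}_{i_{2l}}$; this reduces to Theorem~\ref{th:ellresult01} at $i=1$. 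I induct on $n$ rather than on $N$ because the recursion of Lemma~\ref{le:main01} already couples $|P_N|$ to the shifted blocks $|P^2_{\bullet}|,|P^3_{\bullet}|$; since each $P^i_m$ is itself a band matrix of exactly the same shape as $P_N$, the identical cofactor manipulation proving Lemma~\ref{le:main01} yields the same recursion for $|P^i_m|$ with every index shifted. A single formula with the factor $\Gl_i$ attached only in the odd case suffices because $\Gl_i\in\{\Gl,1-\Gl\}$ and $q$ is invariant under $\Gl\mapsto 1-\Gl$, so the parity of the starting index is handled automatically.

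The first step is to strip the $\Gl$-dependence out of the recursion's coefficients. Because consecutive values satisfy $\Gl_i+\Gl_{i+1}=1$, one has the parity-independent identities
\begin{equation*}
\Gl_i+(\Gl_{i+1}-1)t^i_{i+1}=\Gl_i(1-t^i_{i+1}),\qquad (\Gl_{i+1}+1)(\Gl_{i+1}-2)=q-2,\qquad \Gl_i\Gl_{i+1}=-q,
\end{equation*}
together with the mirror identities at the top end with $(m-1,m)$ replacing $(i,i+1)$. Substituting these, the recursion collapses to a four-term relation whose scalar coefficients are built only from $\Gl_i$, $\Gl_m$, $t^i_{i+1}$, $t^{m-1}_m$ and the polynomials $-q$, $q-2$, $(q-2)^2$. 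Feeding in the induction hypothesis and repeatedly using $\Gl_a\Gl_{a+1}=-q$ to combine the residual factors $\Gl_i,\Gl_m$ with the $\Gl$-prefactors carried by the odd-sized sub-blocks, I expect the right-hand side to reduce to a pure polynomial in $q$ (times the combinatorial sums) when $n$ is even, and to $\Gl_i$ times such an expression when $n$ is odd. This already reproduces the even/odd dichotomy of the claim; confirming that the numerical coefficients come out as $-q,\,q(q-2),\,(q-2)^2$ is a routine check to be carried out for each parity of $i$ and $m$.

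The hard part is the surviving combinatorial identity: that the four sums $\sigma^{i+1,m-1}_{\bullet},\sigma^{i+1,m-2}_{\bullet},\sigma^{i+2,m-1}_{\bullet},\sigma^{i+2,m-2}_{\bullet}$, weighted by the expansion of $(1-t^i_{i+1})(1-t^{m-1}_m)$ and by the $q$-polynomials above, reassemble into $\sigma^{i,m}_{2k}$ over the full range $\{i,\dots,m\}$ for every $k$. Two features are essential. First, the entries are not free variables: $t^a_b=(r_b/r_a)^3$ obeys $t^a_b t^b_c=t^a_c$, so a boundary edge $t^i_{i+1}$ may merge with an adjacent sub-arc to form a longer arc $t^i_{j}$ reaching past $i+1$; this multiplicativity is exactly what manufactures terms such as $t^1_3=t^1_2t^2_3$ that never appear explicitly in the raw recursion. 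Second, the signs organise cleanly after passing to the primitive ratios $s_p:=t^p_{p+1}$: since $\tau_{\mathbf{i}_{2k}}\prod_l t^{i_{2l-1}}_{i_{2l}}=\prod_l\prod_{p=i_{2l-1}}^{i_{2l}-1}(-s_p)$, each arc contributes precisely the product of the signed ratios $-s_p$ it spans. With these in hand I would classify every combination contributing to $\sigma^{i,m}_{2k}$ by how its extreme arcs meet the four boundary indices $i,i+1,m-1,m$, and verify that each configuration is produced, with the correct power of $q$ and the correct factor $2^k$, by exactly one of the four grouped contributions, the over-long $t$-products cancelling in pairs (as they visibly do in the $N=4,5,6$ computations preceding the statement). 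Making this accounting precise and uniform in $k$ is where the real work lies.

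The base cases $n\le 3$ are immediate: $|P^i_i|=\Gl_i$, the convention $|P^3_2|=1$, and the size-$2$ and size-$3$ determinants recorded in \eqnref{eq:rec0101} all match the formula with $L=0$ or $L=1$. Since the recursion lowers the size from $n$ to $n-2,\,n-3,\,n-4$, strong induction from these base cases closes the argument, and setting $i=1$ recovers Theorem~\ref{th:ellresult01}.
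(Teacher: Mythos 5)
Your strategy coincides with the paper's own: induct via the recursion of Lemma \ref{le:main01} with base cases \eqnref{eq:rec0101}, exploit $\Gl_i\in\{\Gl,1-\Gl\}$ together with the invariance $q(1-\Gl)=q(\Gl)$ to absorb the parity of the starting index, and reduce the claim to a combinatorial reassembly of four shifted sums. Your preliminary reductions are correct: the identities $\Gl_i+(\Gl_{i+1}-1)t^i_{i+1}=\Gl_i(1-t^i_{i+1})$, $(\Gl_{i+1}+1)(\Gl_{i+1}-2)=q-2$ and $\Gl_i\Gl_{i+1}=-q$ reproduce exactly the coefficients $q$, $q(q-2)$, $(q-2)^2$ appearing in the paper's Cases i) and ii), and your unified formula for $|P^i_m|$ with the prefactor $\Gl_i$ in the odd case checks against all the base data in \eqnref{eq:rec0101}, including the convention $|P^3_2|=1$. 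Your strong induction on block size is only a mild repackaging of the paper's induction on $N$: the paper states the hypothesis for $P_N$ alone and applies it to $|P^2_{N-1}(1-\Gl)|$, $|P^3_{N-1}(\Gl)|$, etc.\ after relabeling the radii, which amounts to the same thing.

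The gap is that the decisive step --- that for every $k$ the four weighted shifted sums $\mathfrak{a}_k,\mathfrak{b}_k,\mathfrak{c}_k,\mathfrak{d}_k$ reassemble into $\sum_{\mathbf{i}_{2k}\in C_{N}^{0,2k}}\tau_{\mathbf{i}_{2k}}\prod_{l=1}^k t^{i_{2l-1}}_{i_{2l}}$ --- is announced as ``where the real work lies'' but never carried out, and this verification is precisely the bulk of the paper's proof of Theorem \ref{th:ellresult01}. The paper executes it by computing the differences $\mathfrak{b}_k-\mathfrak{d}_k$, $\mathfrak{c}_k-\mathfrak{d}_k$ and the double difference $\mathfrak{a}_k-\mathfrak{b}_k-\mathfrak{c}_k+\mathfrak{d}_k$ in closed form (\eqnref{eq:solpfmn02}, \eqnref{eq:solpfmn03}) --- these isolate exactly the combinations whose arcs touch the excised boundary indices --- and substituting into the telescoped coefficient identity \eqnref{eq:solpfmn01}, followed by a long term-by-term regrouping. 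Note also that \eqnref{eq:solpfmn01} is valid only for $2\le k\le \lfloor N/2\rfloor-1$: the paper handles $g_0$, $g_1$ and $g_{\lfloor N/2\rfloor}$ by separate hand computations, a case split your ``uniform in $k$'' accounting would have to reproduce. Your two structural observations --- the multiplicativity $t^a_bt^b_c=t^a_c$, which manufactures the long arcs absent from the raw recursion, and the sign normalization $\tau_{\mathbf{i}_{2k}}\prod_{l}t^{i_{2l-1}}_{i_{2l}}=\prod_l\prod_{p}(-s_p)$ --- are sound and are indeed the mechanisms the paper uses (the first implicitly throughout its $g_1$ and $g_k$ computations), so your plan would very likely close; but as written the proposal establishes the frame of the paper's induction without proving its combinatorial heart.
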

We want to mention that if the number of layers, $N$, is odd then there exists one root, i.e., $\lambda=0$ to the determinant $P_N(\Gl)$. This equals to the plasmon mode that $\varepsilon^*=2\varepsilon_0$. The other roots are contained in a quadratic polynomial, whose constant terms are the roots of a polynomial of order $\lfloor N/2\rfloor$, respectively. Here we denote by $\lfloor t \rfloor$ the integer part of $t\in \RR$. In other words, to find the roots of $|P(\Gl)|$, we first find the roots of $|P(q)|$, which is a polynomial of order $\lfloor N/2\rfloor$, with respect to $q$. We then solve the following quadratic equation
$$
\Gl^2-\Gl-q=0
$$
to find the roots of $|P(\Gl)|$.
We shall analyze all the roots to the polynomial $|P_N(\Gl)|$. Note that \eqnref{eq:maincong01} can also be written by the following compact form:
\beq \label{eq:maincong02}
|P_N(\Gl)|=
(-1)^{\lfloor N/2\rfloor}\Gl^{N-2\lfloor N/2\rfloor}\left(\sum_{k=0}^{\lfloor N/2\rfloor} 2^k q(\lambda)^{{\lfloor N/2\rfloor}-k}\left(\sum_{\mathbf{i}_{2k}\in C_{N}^{0,2k}}\tau_{\mathbf{i}_{2k}}\prod_{l=1}^k t^{i_{2l-1}}_{i_{2l}}\right)\right).
\eeq
As mentioned before, to find the plasmon modes, it is essential to find the roots of the polynomial
\beq\label{eq:deffq01}
f_N(q):=\sum_{k=0}^{\lfloor N/2\rfloor} 2^k q^{{\lfloor N/2\rfloor}-k}\left(\sum_{\mathbf{i}_{2k}\in C_{N}^{0,2k}}\tau_{\mathbf{i}_{2k}}\prod_{l=1}^k t^{i_{2l-1}}_{i_{2l}}\right).
\eeq
We have the following elementary result on the roots of $f(q)$:
\begin{thm}\label{th:ellresult02}
Suppose $f_N(q)$ is defined in \eqnref{eq:deffq01}. Then it exists $\lfloor N/2\rfloor$ real values of roots to $f_N(q)$. Besides, let $q^*$ be the root to $f_N(q)$, then
\beq\label{eq:thmainsp01}
q^*\in [-1/4, 2].
\eeq
\end{thm}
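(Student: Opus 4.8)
The plan is to read off the $\lfloor N/2\rfloor$ roots of $f_N$ as the eigenvalues of an \emph{explicit real symmetric matrix}, so that their reality is automatic, and then to localize them by semidefinite (Gershgorin-type) bounds. As a preliminary reduction I would record that, by \eqnref{eq:deffq01}, $f_N$ is monic of degree $L:=\lfloor N/2\rfloor$ in $q$ (its leading coefficient is the $k=0$ term, which equals $1$), so it has exactly $L$ roots counted with multiplicity; it therefore suffices to exhibit $L$ real roots lying in $[-1/4,2]$. I would also note that the two endpoints are forced by the substitution $q=q(\Gl)=\Gl^2-\Gl$ built into \eqnref{eq:maincong02}: the map $\Gl\mapsto\Gl^2-\Gl$ carries the real interval $[-1,2]$ onto $[-1/4,2]$ two-to-one, folding $\Gl$ and $1-\Gl$ together, with $\Gl=\tfrac12\mapsto-\tfrac14$ and $\Gl\in\{-1,2\}\mapsto 2$. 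Thus ``$q^*\in[-1/4,2]$'' is exactly equivalent to all eigenvalues $\Gl$ of $K_N$ being real and contained in $[-1,2]$, and the reality of the $q^*$ is equivalent to the reality of the spectrum of $K_N$.

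The heart of the argument is to show $f_N(q)=\det(qI_L-J_N)$ for an explicit real symmetric tridiagonal matrix $J_N$ assembled from the radii. The combinatorial shape of the coefficients in \eqnref{eq:deffq01}, namely a signed sum over ordered disjoint index-pairs $(i_{2l-1},i_{2l})$ weighted by $2\,t^{i_{2l-1}}_{i_{2l}}$ with each unused index contributing a factor $q$, is precisely the continuant expansion of a tridiagonal determinant once one uses the multiplicativity $t^i_j=\prod_{k=i}^{j-1}t^{k}_{k+1}$ to telescope the long-range factors $t^i_j$ into products of nearest-neighbour ratios. Concretely I would group the layers in consecutive pairs, let the diagonal entry $d_\ell$ of $J_N$ collect the self-energy of the $\ell$-th pair together with its couplings, and let the off-diagonal entry $e_\ell$ encode the coupling $t^{2\ell}_{2\ell+1}$ between adjacent pairs; the entries are then forced by matching coefficients and verified by induction on $N$ using the four-term recursion \eqnref{eq:deter0102} together with the base computations \eqnref{eq:rec0101}. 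For $N=4$ this already works, since $\det(qI_2-J_4)=q^2-(d_1+d_2)q+(d_1d_2-e_1^2)$ must match $f_4(q)=q^2+2\big(\sum_{(i,j)\in C_4^{0,2}}(-1)^{i+j}t^i_j\big)q+4\,t^1_2t^3_4$, i.e. $d_1+d_2=2(t^1_2+t^2_3+t^3_4-t^1_3-t^2_4+t^1_4)$ and $d_1d_2-e_1^2=4\,t^1_2t^3_4$, which admits a real symmetric solution. Symmetry in general, i.e. the existence of real $e_\ell$, amounts to the positivity $e_\ell^2\ge 0$ of the associated products; once this holds the spectrum of $J_N$ is real, hence so are all the $q^*$.

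With $J_N$ symmetric, the localization $q^*\in[-1/4,2]$ reduces to the two semidefinite inequalities $2I_L-J_N\succeq 0$ and $J_N+\tfrac14 I_L\succeq 0$. I would obtain these by a Gershgorin estimate: using $0<t^i_j<1$ for $i<j$ together with the telescoping identity above, the diagonal entries $d_\ell$ and the row-sums of the off-diagonal moduli $|e_{\ell-1}|+|e_\ell|$ are controlled so that every Gershgorin disc sits inside $[-1/4,2]$. The endpoints are sharp and attained only in the degenerate limit $r_1=\cdots=r_N$, consistently with the fact that $f_N(2)\ge 0$ vanishes exactly there. Equivalently one may transfer the estimate to the variable $\Gl$ and show directly that $\det P_N(\Gl)\neq 0$ for real $\Gl\notin[-1,2]$ and for non-real $\Gl$.

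The main obstacle is the second paragraph: pinning down the entries of $J_N$ and proving both the determinant identity $f_N=\det(qI_L-J_N)$ and the positivity $e_\ell^2\ge 0$ in full generality. The identity is an induction whose bookkeeping is governed by \eqnref{eq:deter0102}, which mixes the outer block $P^2_{\bullet}$ and the inner block $P^3_{\bullet}$ and so does not collapse to a single-family three-term recurrence; the positivity is a genuine multivariate inequality on the ratios $t^i_j$ (a discriminant-type estimate) in which the ordering $r_1>\cdots>r_N$ and the multiplicativity of the $t^i_j$ must be used decisively. Should a clean tridiagonal $J_N$ resist being found, the fallback is to prove instead that a suitably shifted subfamily of $\{f_N\}$ obeys a three-term recurrence of Sturm type, yielding real interlacing roots, and then to localize them by the same sign analysis of $f_N$ at $q=-1/4$ and $q=2$.
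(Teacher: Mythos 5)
Your preliminary reductions are sound (monicity of $f_N$ in $q$, and the two-to-one fold $\Gl\mapsto\Gl^2-\Gl$ carrying $[-1,2]$ onto $[-1/4,2]$), but the two pillars of your plan are never established, and at the only place you test one of them the argument is circular. For $N=4$, a real symmetric matrix $\bigl(\begin{smallmatrix} d_1 & e_1\\ e_1 & d_2\end{smallmatrix}\bigr)$ with prescribed trace and determinant exists if and only if $(d_1+d_2)^2-4(d_1d_2-e_1^2)\ge 0$ with $e_1\in\RR$, i.e.\ if and only if the discriminant of $f_4$ is nonnegative --- which \emph{is} the reality of its two roots, the very statement under proof; so ``admits a real symmetric solution'' assumes the conclusion. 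The same circularity persists for general $N$: the positivity $e_\ell^2\ge 0$ is not bookkeeping but carries the full strength of the theorem, and you offer no mechanism for proving it (you flag it yourself as the main obstacle, correctly). The structural claim that the signed long-range sums in \eqnref{eq:deffq01} telescope into a nearest-neighbour continuant is likewise unsupported: the only available recursion, \eqnref{eq:deter0102}, is a four-term relation mixing the two families $|P^2_{\bullet}|$ and $|P^3_{\bullet}|$, so neither the tridiagonal identity nor your fallback Sturm-type three-term recurrence is in hand. Even granting a symmetric $J_N$, the Gershgorin inequalities $d_\ell+|e_{\ell-1}|+|e_\ell|\le 2$ and $d_\ell-|e_{\ell-1}|-|e_\ell|\ge -1/4$ are unverified and would have to be essentially sharp, since the computed roots in TABLE \ref{tab:1} come within $10^{-2}$ of the endpoint $q=2$.

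The paper proves the theorem by an entirely different, non-algebraic route that sidesteps any explicit symmetrization. Arguing by contradiction, it assumes $\Gl\notin[-1,2]$ or $\Im\Gl\neq 0$, takes a nontrivial null vector $\By$ of $P_N(\Gl)^T$, and builds from it the piecewise potential $u$ of \eqnref{layerstr01u01}, which solves the transmission problem \eqnref{eq:mainmd01ap01} with contrast $\varepsilon_c=\frac{\Gl-2}{\Gl+1}\varepsilon_0$ on the odd layers; after checking $u\not\equiv 0$ via \eqnref{eq:eq0101}--\eqnref{eq:eq0102}, integration by parts yields $0=\int_{\RR^3}\varepsilon|\nabla u|^2$ as in \eqnref{eq:keyint01}. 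For real $\Gl\notin[-1,2]$ the factor $\frac{\Gl-2}{\Gl+1}$ is positive and the quadratic form is definite; for $\Im\Gl\neq 0$ the imaginary part of \eqnref{eq:keyint01} annihilates $\nabla u$ on the odd layers; in both cases $u\equiv 0$ follows, contradicting the nontriviality of $\By$. In effect this energy identity supplies the ``hidden symmetry'' that makes the spectrum real, which your plan tries to make explicit matrix-theoretically. To rescue your route you would have to prove the discriminant-type inequalities in the ratios $t^i_j$ directly --- that is the hard content, not a detail --- or replace the tridiagonal realization by a quadratic-form argument of precisely the paper's kind.
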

With Theorem \ref{th:ellresult02}, one can readily show that the roots of $|P_N(\Gl)|$ are all real values. In fact, for any real solution $q^*$ to \eqnref{eq:deffq01}, one can derive two roots of $|P_N(\Gl)|$, more specifically,
\beq
\Gl=\frac{1\pm\sqrt{1+4q^*}}{2}.
\eeq
By using \eqnref{eq:thmainsp01} one can estimate that
$$
\Gl \in [-1, 2].
$$
\subsection{Extreme case}
In this part, we shall consider that the radius of the layers are extreme large. We mention that the Earth's structure can be treated in this case.
To mathematical describe this case, we set $r_i=R+c_i$, where $R\gg 1$ and $c_i$ are regular constants, $i=1, 2, \ldots, N$. One then has
$$
t^i_j=(r_j/r_i)^3=1+\Ocal(1/R).
$$
Define
\beq\label{eq:coefg01}
g_{N,k}:=\sum_{\mathbf{i}_{k}\in C_{N}^{0,k}}\tau_{\mathbf{i}_{k}},
\eeq
by straight forward computations, one then has
$$
g_{N,1}=\sum_{\mathbf{i}_{1}\in C_{N}^{0,1}}\tau_{\mathbf{i}_{1}}=((-1)^N -1)/2, \quad g_{N,2}=\sum_{\mathbf{i}_{2}\in C_{N}^{0,2}}\tau_{\mathbf{i}_{2}}=- \lfloor N/2\rfloor,
$$
and
$$
g_{2N+1,2N+1}=\sum_{\mathbf{i}_{2N+1}\in C_{2N+1}^{0,2N+1}}\tau_{\mathbf{i}_{2N+1}}=(-1)^{N+1}, \quad g_{N,2\lfloor N/2\rfloor}=\sum_{\mathbf{i}_{2}\in C_{N}^{0,2\lfloor N/2\rfloor}}\tau_{\mathbf{i}_{2\lfloor N/2\rfloor}}=(-1)^{\lfloor N/2\rfloor}.
$$
Besides, one can also observe that
\beq\label{eq:obse01}
g_{2N, 2k-1}=\sum_{\mathbf{i}_{2k-1}\in C_{2N}^{0,2k-1}}\tau_{\mathbf{i}_{2k-1}}=0, \quad k=1, 2, \ldots, N.
\eeq
We have the following recursive formula:
\begin{lem}\label{le:recco01}
Suppose that $g_{N,k}$ is given by \eqnref{eq:coefg01}, then there holds
\beq\label{eq:lerec01}
g_{2N+1,2k}=g_{2N,2k},  \quad k=1, 2, \ldots, N,
\eeq
and
\beq\label{eq:lerec02}
\begin{split}
g_{2N+2,2k}=-g_{2N+1,2k-2}+g_{2N+1,2k}, & \quad k=2, 3, \ldots, N, \\
g_{2N+1,2k-1}=-g_{2N-1,2k-3}+g_{2N-1,2k-1}, & \quad k=2, 3, \ldots, N.
\end{split}
\eeq
\end{lem}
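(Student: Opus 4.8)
The plan is to treat $g_{N,k}$ as the signed count $\sum_{1\le i_1<\cdots<i_k\le N}(-1)^{i_1+\cdots+i_k}$, i.e. the $k$-th elementary symmetric function of the values $(-1)^1,(-1)^2,\ldots,(-1)^N$. The cleanest handle on all three identities is a single master recursion obtained by conditioning a combination $\mathbf{i}_{k}\in C_N^{0,k}$ on whether it contains the largest admissible index $N$: those omitting $N$ are exactly the elements of $C_{N-1}^{0,k}$, while those containing $N$ correspond bijectively to elements of $C_{N-1}^{0,k-1}$ with an extra sign factor $(-1)^N$. This yields
\beq\label{eq:master}
g_{N,k}=g_{N-1,k}+(-1)^N g_{N-1,k-1}.
\eeq
Equivalently, \eqref{eq:master} is the coefficient identity behind $\sum_k g_{N,k}t^k=\prod_{i=1}^N\bigl(1+(-1)^i t\bigr)=(1-t)^{\lceil N/2\rceil}(1+t)^{\lfloor N/2\rfloor}$, which already encodes everything; I will nonetheless argue directly from \eqref{eq:master} together with the vanishing fact \eqref{eq:obse01}, namely $g_{2M,2k-1}=0$.

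First I would prove \eqref{eq:lerec01}. Applying \eqref{eq:master} with the odd top index $2N+1$ gives $g_{2N+1,2k}=g_{2N,2k}-g_{2N,2k-1}$, and the last term vanishes by \eqref{eq:obse01} since $2k-1$ is odd; this is exactly \eqref{eq:lerec01}, valid on the stated range $k=1,\ldots,N$. The same step applied to an odd lower index produces the companion identity
\beq\label{eq:comp}
g_{2N+1,2k-1}=g_{2N,2k-1}-g_{2N,2k-2}=-g_{2N,2k-2},
\eeq
again using \eqref{eq:obse01}. These two consequences of \eqref{eq:master} will drive the rest.

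For the first line of \eqref{eq:lerec02} I would expand $g_{2N+2,2k}$ by \eqref{eq:master} with the even top index $2N+2$, obtaining $g_{2N+2,2k}=g_{2N+1,2k}+g_{2N+1,2k-1}$. Substituting the companion identity \eqref{eq:comp} and then \eqref{eq:lerec01} with $k$ replaced by $k-1$ (legitimate since $k\ge 2$) turns $g_{2N+1,2k-1}=-g_{2N,2k-2}=-g_{2N+1,2k-2}$, which gives precisely $g_{2N+2,2k}=-g_{2N+1,2k-2}+g_{2N+1,2k}$. For the second line I would start from \eqref{eq:comp}, expand $g_{2N,2k-2}$ by \eqref{eq:master} (even top index $2N$, so the sign is $+1$) as $g_{2N-1,2k-2}+g_{2N-1,2k-3}$, and then replace the even-index term using the auxiliary relation $g_{2N-1,2k-2}=-g_{2N-1,2k-1}$; this yields $g_{2N+1,2k-1}=g_{2N-1,2k-1}-g_{2N-1,2k-3}$, which is the claim.

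The only genuinely new ingredient beyond bookkeeping is that auxiliary relation $g_{2N-1,2k-2}=-g_{2N-1,2k-1}$, and this is where I expect the main (mild) obstacle: it is not a direct instance of \eqref{eq:master} but follows by matching two earlier consequences, namely $g_{2N-1,2k-2}=g_{2N-2,2k-2}$ (which is \eqref{eq:lerec01} with $N\mapsto N-1,\ k\mapsto k-1$) and $g_{2N-1,2k-1}=-g_{2N-2,2k-2}$ (the companion \eqref{eq:comp} with $N\mapsto N-1$); equating the right-hand sides gives the relation. The remaining care is purely in parity and index-range bookkeeping, checking that each shift keeps indices in the ranges stated in the lemma and that the parity of the top index is correctly even or odd at each use of \eqref{eq:master}. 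A fully independent cross-check is available from the closed forms $g_{2N,2j}=(-1)^j\binom{N}{j}$ and $g_{2N+1,2j+1}=(-1)^{j+1}\binom{N}{j}$ read off from the generating function, against which each of \eqref{eq:lerec01}–\eqref{eq:lerec02} reduces to Pascal's rule.
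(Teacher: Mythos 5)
Your proof is correct, and its backbone --- the one-step recursion $g_{N,k}=g_{N-1,k}+(-1)^{N}g_{N-1,k-1}$, obtained by conditioning a combination on whether it contains the top index $N$, combined with the vanishing fact \eqnref{eq:obse01} --- is exactly the engine the paper uses implicitly: the paper's steps $g_{2N+1,2k}=-g_{2N,2k-1}+g_{2N,2k}$ and $g_{2N+1,2k-1}=-g_{2N,2k-2}+g_{2N,2k-1}$ are instances of it, though the paper never states the recursion in general form. For \eqnref{eq:lerec01} and the second line of \eqnref{eq:lerec02} you and the paper perform essentially the same computation; the only cosmetic difference is that the paper expands both $g_{2N,2k-2}$ and $g_{2N,2k-1}$ one level down so that the $g_{2N-1,2k-2}$ terms cancel directly, whereas you first discard $g_{2N,2k-1}=0$ and then recover the cancellation through your auxiliary relation $g_{2N-1,2k-2}=-g_{2N-1,2k-1}$ (incidentally, the paper's displayed intermediate line for this identity contains a typo, repeating $g_{2N-1,2k-1}$ where a cancelling $\pm g_{2N-1,2k-2}$ pair is meant, but its endpoint agrees with yours). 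The genuine divergence is the first line of \eqnref{eq:lerec02}: the paper proves it by induction on $N$, checking base cases $N\le 4$ and running a five-step chain through both previously established identities, while you obtain it directly, with no induction, from $g_{2N+2,2k}=g_{2N+1,2k}+g_{2N+1,2k-1}$ together with $g_{2N+1,2k-1}=-g_{2N,2k-2}=-g_{2N+1,2k-2}$ --- a real simplification, since the two ingredients the paper already has in hand render its induction unnecessary. Your generating-function remark, $\sum_{k}g_{N,k}t^{k}=\prod_{i=1}^{N}\bigl(1+(-1)^{i}t\bigr)=(1-t)^{\lceil N/2\rceil}(1+t)^{\lfloor N/2\rfloor}$, buys still more: for even top index the product is $(1-t^{2})^{N/2}$, which proves \eqnref{eq:obse01} (asserted without proof in the paper) and yields at once the closed form $g_{N,2k}=(-1)^{k}C_{\lfloor N/2\rfloor}^{k}$ that the paper later attributes to ``elementary combination theory,'' against which all three identities of the lemma reduce to Pascal's rule, exactly as you say.
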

\begin{proof}
First by using \eqnref{eq:obse01} we have
$$
g_{2N+1,2k}=-g_{2N,2k-1}+g_{2N,2k}=g_{2N,2k},
$$
which verifies \eqnref{eq:lerec01}. Next we compute
\[
\begin{split}
g_{2N+1,2k-1}=&-g_{2N, 2k-2}+g_{2N, 2k-1}\\
=&-g_{2N-1,2k-3}+g_{2N-1,2k-1}+g_{2N-1, 2k-1}\\
=&-g_{2N-1, 2k-3}+g_{2N-1, 2k-1},
\end{split}
\]
which verifies the second equation in \eqnref{eq:lerec02}. To prove the first equation in \eqnref{eq:lerec02}, we shall make use of induction. It can be simply verified that the first equation in \eqnref{eq:lerec02} holds for $N\leq 4$. Suppose it holds for $N\leq N_0-1$, $N_0\geq 5$. By combing the above results, one finally has
\[
\begin{split}
g_{2N_0+2,2k}=&g_{2N_0+1, 2k-1}+g_{2N_0+1, 2k}\\
=&-g_{2N_0-1,2k-3}+g_{2N_0-1,2k-1}+g_{2N_0+1, 2k}\\
=&g_{2N_0-2, 2k-4}-g_{2N_0-2,2k-2}+g_{2N_0+1, 2k}\\
=&g_{2N_0-1, 2k-4}-g_{2N_0-1,2k-2}+g_{2N_0+1, 2k}\\
=&-g_{2N_0, 2k-2}+g_{2N_0+1, 2k}=-g_{2N_0+1, 2k-2}+g_{2N_0+1, 2k},
\end{split}
\]
which completes the proof.
\end{proof}
By using Lemma \ref{le:recco01}, the associated polynomial  $f_N(q)$ is then given by
\beq \label{eq:maincongex02}
f_N(q)=
\sum_{k=0}^{\lfloor N/2\rfloor} 2^k q^{{\lfloor N/2\rfloor}-k}g_{N,2k}+\Ocal(1/R),
\eeq
where $g_{N,2k}$ can be calculated recursively by using \eqnref{eq:lerec02}.
By using elementary combination theory, one can show that
\beq
g_{N,2k}=(-1)^kC_{\lfloor N/2 \rfloor}^k,
\eeq
where $C_{\lfloor N/2 \rfloor}^k$ denote the number of combinations for $k$ out of $\lfloor N/2 \rfloor$. \eqnref{eq:maincongex02} then becomes
\beq
f_N(q)=
\sum_{k=0}^{\lfloor N/2\rfloor} (-1)^k2^k q^{{\lfloor N/2\rfloor}-k}C_{\lfloor N/2 \rfloor}^k+\Ocal(1/R)=(q-2)^{\lfloor N/2 \rfloor}+\Ocal(1/R).
\eeq
This indicates that the possible plasmon modes for such set of multi-layer structure are only
$\Gl=-1, 2$ if the number of layers $N$ is even and $\Gl=0, -1, 2$ if $N$ is odd.

\section{Two dimensional case}
For the sake of completeness, we shall present the related results for two dimensional case. Consider again the conductivity problem \eqnref{eq:mainmd01} but in two dimensional space $\RR^2$.
\subsection{Perturbation filed}
Let us keep the notations for the multi-layer structure $D$. Suppose that the background harmonic field $H$ admits the following expansion:
\beq\label{eq:defH0201}
H=\sum_{n=0}^{\infty}\tilde{a}_{0,n}r^n \cos{n\theta},
\eeq
where $r(\cos\theta, \sin\theta)$ is the polar coordinate for $\Bx:=(x, y)$.
Then the solution $u$ to \eqnref{eq:mainmd01} can be represented by
\beq\label{layerstr0201}
u=\left\{
\begin{split}
&\sum_{n=0}^{\infty}\tilde{a}_{N,n}r^n \cos{n\theta}, \quad \Bx\in A_N,\\
&\sum_{n=-\infty}^{\infty}\tilde{a}_{j,n}r^n \cos{n\theta}, \quad \Bx\in A_j,\quad j=N-1, N-2, \ldots , 1\\
&\sum_{n=-\infty}^{\infty}\tilde{a}_{0,n}r^n \cos{n\theta}, \quad \Bx\in A_{0}
\end{split}
\right.
\eeq
By using transmission conditions across each layer $A_j$, $j=1, 2, \ldots, N$, one can then derive that
\beq\label{eq:trans0201}
\left\{
 \begin{split}
 &\tilde{a}_{j,n} r_j^n + \tilde{a}_{j,-n}r_j^{-n} = \tilde{a}_{j-1, n} r_{j}^n + \tilde{a}_{j-1, -n}r_{j}^{-n} , \\
 &\varepsilon_{j} n\left( \tilde{a}_{j, n}r_j^n  -\tilde{a}_{j, -n}r_j^{-n}
\right) = \varepsilon_{j-1} n\left( \tilde{a}_{j-1, n}r_j^n  -\tilde{a}_{j-1, -n} r_{j}^{-n}\right)
 \end{split}
\right.
 \eeq
for $n=0, 1, \ldots, \infty$. By following a similar strategy one thus has the following result:
\begin{thm}\label{th:solmain01}
Suppose $u$ is the solution to
\beq\label{eq:mainmd0201}
\left\{
\begin{array}{ll}
\nabla\cdot \varepsilon\nabla u =0, & \mbox{in} \quad \RR^2\\
u-H=\Ocal(|\Bx|^{-1}), & |\Bx|\rightarrow \infty,
\end{array}
\right.
\eeq
where $H$ is given in \eqnref{eq:defH0201}, with the parameter $\varepsilon$ given by piecewise constant values in \eqnref{eq:paracho01}, which are positive real numbers. Then there holds
\beq\label{eq:purbmn0201}
u-H=\itbf{e}^T\sum_{n=1}^{\infty}r^{-n} \cos{n\theta} \tilde\Upsilon_{N}^{(n)} (\tilde{P}_{N}^{(n)})^{-1}\itbf{e},
\eeq
where $\itbf{e}:=(1,1,\ldots,1)^T$, the matrix $\tilde{P}_{N}^{(n)}$ and $\tilde\Upsilon_{N}^{(n)}$ are given by
 \beq\label{eq:matP0201}
\tilde{P}_{N}^{(n)}:= \begin{bmatrix}
 \tilde\Gl_1 & (r_{2}/r_1)^{2n} & (r_{3}/r_{1})^{2n} & \cdots& (r_{N}/r_1)^{2n} \\
-1& \tilde\Gl_{2} & (r_{3}/r_{2})^{2n} &\cdots &(r_{N}/r_2)^{2n} \\
\vdots & \vdots & \vdots & \ddots & \vdots\\
-1 & -1 & -1  & \cdots & (r_{N}/r_{N-1})^{2n}\\
-1 & -1 & -1 &\cdots & \tilde\Gl_{N}
 \end{bmatrix}
 \eeq
and
\beq\label{eq:matU0201}
\tilde\Upsilon_{N}^{(n)}:= \begin{bmatrix}
 r_1^{2n} & 0 & 0 &\cdots& 0 \\
0 & r_{2}^{2n} & 0 &\cdots& 0 \\
0 & 0 & r_{3}^{2n} &\cdots& 0 \\
\vdots & \vdots & \vdots & \ddots & \vdots\\
0 & 0 & 0 &\cdots & r_{N}^{2n}
 \end{bmatrix} .
\eeq
In \eqnref{eq:matP0201}, the notations $\tilde\Gl_j$, $j=1,2, \ldots, N$ are defined by
  \beq\label{eq:deflamb0201}
 \tilde\Gl_{j}=\frac{\varepsilon_{j-1}+\varepsilon_{j}}{\varepsilon_{j-1}-\varepsilon_{j}}, \quad j=1,2,\ldots,N.
 \eeq
\end{thm}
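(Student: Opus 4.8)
The plan is to mirror the derivation of the three-dimensional formula \eqnref{eq:purbmn01}, working one Fourier mode at a time. First I would fix $n\geq 1$ and use the orthogonality of the system $\{\cos n\theta\}$ to decouple the modes: in each annulus $A_j$ the general harmonic solution of the given form is $(\tilde a_{j,n}r^{n}+\tilde a_{j,-n}r^{-n})\cos n\theta$, regularity at the origin forces $\tilde a_{N,-n}=0$, and matching the growing part in $A_0$ to the background \eqnref{eq:defH0201} fixes $\tilde a_{0,n}$ to be the prescribed coefficient. Consequently $u-H=\sum_{n\geq 1}\tilde a_{0,-n}\,r^{-n}\cos n\theta$ in $A_0$, so the entire problem reduces to computing the outgoing coefficients $\tilde a_{0,-n}$.

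Next I would write the two transmission relations \eqnref{eq:trans0201} at each interface $r_j$ and take the combination that removes the contrast mismatch: multiplying the potential-continuity equation by $\varepsilon_{j-1}$ and subtracting it from the flux equation produces, at every interface, a relation governed precisely by the factor $\tilde\Gl_j=(\varepsilon_{j-1}+\varepsilon_j)/(\varepsilon_{j-1}-\varepsilon_j)$ of \eqnref{eq:deflamb0201}. Assembling these $N$ interface relations into a single linear system for the coefficient vector yields a matrix whose diagonal entries are exactly the $\tilde\Gl_j$. The off-diagonal entries record the radial profile of one interface evaluated at another: a contribution supported at $r_j$ seen from the larger sphere $r_i$ with $j>i$ carries the ratio $(r_j/r_i)^{2n}$, while the reverse configuration contributes $-1$, which reproduces the upper-triangular ratio pattern and the lower-triangular $-1$ pattern of $\tilde P_N^{(n)}$ in \eqnref{eq:matP0201}. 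The diagonal rescaling by $r_j^{2n}$ that symmetrizes this bookkeeping is exactly what is collected into $\tilde\Upsilon_N^{(n)}$ of \eqnref{eq:matU0201}.

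Finally I would solve the resulting system $\tilde P_N^{(n)}\mathbf v=\itbf{e}$, the background amplitude $\tilde a_{0,n}$ being carried as an overall prefactor as in the three-dimensional case, and read off $\tilde a_{0,-n}$ as the linear functional $\itbf{e}^T\tilde\Upsilon_N^{(n)}\mathbf v=\itbf{e}^T\tilde\Upsilon_N^{(n)}(\tilde P_N^{(n)})^{-1}\itbf{e}$ of the solution; summing over $n$ against $r^{-n}\cos n\theta$ then produces \eqnref{eq:purbmn0201}. The \emph{main obstacle} is the elimination in the second step: collapsing the $2N$ transmission equations into exactly the claimed $N\times N$ matrix, with the precise split into the upper-triangular ratios $(r_j/r_i)^{2n}$ and the lower-triangular entries $-1$, and with both the data vector and the extraction covector equal to $\itbf{e}$, requires a careful telescoping (or an induction on $N$ parallel to the three-dimensional computation), since a naive composition of the interface relations gives a product of $2\times 2$ transfer blocks rather than the single matrix appearing in the statement. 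Once that matrix identity is in hand, the inversion and the mode summation are routine.
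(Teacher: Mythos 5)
Your overall route coincides with the paper's own: the paper proves the three-dimensional analogue in Section 6 and settles the two-dimensional statement with the words ``by following a similar strategy,'' and your mode-by-mode decoupling, the reduction to the outgoing coefficients $\tilde a_{0,-n}$, and the identification of the diagonal entries $\tilde\Gl_j$ all agree with that strategy. But the step you explicitly defer --- collapsing the $2N$ transmission relations \eqnref{eq:trans0201} into the single $N\times N$ system, which you call the main obstacle and propose to attack by ``careful telescoping'' or induction on $N$ --- is the entire substance of the theorem, and your sketch is missing the one idea that makes it a one-shot elimination rather than a composition of $2\times 2$ transfer blocks: change unknowns to the interface \emph{jumps} $\delta_j:=\tilde a_{j,n}-\tilde a_{j-1,n}$, $j=1,\dots,N$ (in three dimensions this is precisely what the summation matrix $\Xi$ of \eqnref{eq:defxi01} encodes in the paper's proof). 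The first relation in \eqnref{eq:trans0201} gives $\tilde a_{j,-n}-\tilde a_{j-1,-n}=-r_j^{2n}\delta_j$; since $\tilde a_{N,-n}=0$, telescoping yields $\tilde a_{j,-n}=\sum_{k>j}r_k^{2n}\delta_k$ and, at $j=0$, the extraction identity $\tilde a_{0,-n}=\sum_{k=1}^{N}r_k^{2n}\delta_k=\itbf{e}^T\tilde\Upsilon_N^{(n)}\boldsymbol{\delta}$, so the covector $\itbf{e}^T\tilde\Upsilon_N^{(n)}$ is an immediate consequence of continuity, not a separate bookkeeping argument.

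The matrix then falls out of the flux relation in one stroke: setting $X_j:=\tilde a_{j,n}r_j^{n}-\tilde a_{j,-n}r_j^{-n}$, continuity gives $X_j-X_{j-1}=2r_j^{n}\delta_j$ at $r=r_j$, so $\varepsilon_jX_j=\varepsilon_{j-1}X_{j-1}$ forces $X_j=(\tilde\Gl_j+1)r_j^{n}\delta_j$ because $\tilde\Gl_j+1=2\varepsilon_{j-1}/(\varepsilon_{j-1}-\varepsilon_j)$; expanding $X_j$ via $\tilde a_{j,n}=\tilde a_{0,n}+\sum_{k\le j}\delta_k$ and $\tilde a_{j,-n}=\sum_{k>j}r_k^{2n}\delta_k$ and dividing by $r_j^{n}$ gives
\[
-\sum_{k<j}\delta_k+\tilde\Gl_j\,\delta_j+\sum_{k>j}\Bigl(\frac{r_k}{r_j}\Bigr)^{2n}\delta_k=\tilde a_{0,n},\qquad j=1,\dots,N,
\]
which is exactly $\tilde P_N^{(n)}\boldsymbol{\delta}=\tilde a_{0,n}\itbf{e}$ for all interfaces simultaneously --- no induction and no product of transfer matrices; inverting and summing over $n$ then gives \eqnref{eq:purbmn0201}, with the amplitude $\tilde a_{0,n}$ carried as prefactor inside the sum, which you correctly retain. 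By contrast, the elimination you actually propose ($\varepsilon_{j-1}$ times the continuity equation subtracted from the flux equation) leaves a three-term recursion coupling $\tilde a_{j,n}$, $\tilde a_{j,-n}$ and $\tilde a_{j-1,-n}$ --- exactly the transfer-block structure you worry about --- so without the passage to jump variables your acknowledged obstacle remains the unproved heart of the theorem rather than a routine step.
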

\subsection{Plasmon modes}
The perturbation formula \eqnref{eq:purbmn0201} exhibits very rich separable resonator information. For example, to analyze the monopole resonator plasmon modes, one only needs to consider the the term for $n=1$ in \eqnref{eq:purbmn0201}, etc. In this part, we shall introduce a similar structure design as for three dimensional case, that is,
we suppose that
\beq\label{eq:vepdef0201}
\varepsilon_i
=\left\{
\begin{array}{ll}
\varepsilon_1, & i \quad \mbox{is odd},\\
\varepsilon_0, & i \quad \mbox{is even},
\end{array}
\right.
\eeq
where $\varepsilon_1$ is some negative material parameter. In such setup, the matrix $\tilde{P}_{N}^{(n)}$ in \eqnref{eq:matP0201} is then reduced to
\beq\label{eq:matP0202}
\tilde{P}_{N}^{(n)}:= \begin{bmatrix}
 \tilde\Gl_1 & (r_{2}/r_1)^{2n} & (r_{3}/r_{1})^{2n} & \cdots& (r_{N}/r_1)^{2n} \\
-1& -\tilde\Gl_{1} & (r_{3}/r_{2})^{2n} &\cdots &(r_{N}/r_2)^{2n} \\
\vdots & \vdots & \vdots & \ddots & \vdots\\
-1 & -1 & -1  & \cdots & (r_{N}/r_{N-1})^{2n}\\
-1 & -1 & -1 &\cdots & (-1)^{N-1}\tilde\Gl_{1}
 \end{bmatrix}.
 \eeq
All the possible plasmon modes for such setup are contained in the solution to
\beq\label{eq:pnn01}
|\tilde{P}_{N}^{(n)}|=0,
\eeq
for any $n\in \mathbb{N}$, which is equivalent to
\beq \label{eq:maincong0201}
\tilde\Gl_{1}^{N-2\lfloor N/2\rfloor}\left(\sum_{k=0}^{\lfloor N/2\rfloor} \tilde\Gl_{1}^{N-2k}\left(\sum_{\mathbf{i}_{2k}\in C_{N}^{0,2k}}\tau_{\mathbf{i}_{2k}}\prod_{l=1}^k \Big(\frac{r_{2l-1}}{r_{2l}}\Big)^2\right)\right)=0.
\eeq
By following a similar arguments as in the proof of Theorem \ref{th:ellresult02}, one can show that
\begin{thm}\label{th:ellresult0201}
There exists $N$ real values of roots to \eqnref{eq:pnn01}. Besides, let $\tilde\Gl^*$ be one root, then
\beq\label{eq:thmainsp01}
\tilde\Gl^*\in [-1, 1].
\eeq
\end{thm}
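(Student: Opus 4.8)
The plan is to mirror the three-dimensional argument behind Theorem \ref{th:ellresult02}, with the $\pm$ symmetry of the reduced matrix playing the role that the pairing $q=\Gl^2-\Gl$ plays in $\RR^3$. First I would record that, by \eqnref{eq:maincong0201}, the equation \eqnref{eq:pnn01} is, up to the factor $\tilde\Gl_1^{N-2\lfloor N/2\rfloor}$ (which accounts for the root $\tilde\Gl_1=0$ present when $N$ is odd, the $2$D analogue of the monopole mode $\Gl=0$), a polynomial in $\tilde\Gl_1$ containing only even powers. Its nonzero roots therefore occur in pairs $\pm\tilde\Gl^*$, and the substitution $p=\tilde\Gl_1^2$ turns it into a polynomial $\hat f_N(p)$ of degree $\lfloor N/2\rfloor$. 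The assertion then reduces to showing that $\hat f_N$ has $\lfloor N/2\rfloor$ real roots, each in $[0,1]$: realness of $\tilde\Gl^*=\pm\sqrt{p^*}$ comes exactly from $p^*\ge 0$ (paralleling how $q^*\ge -1/4$ forces $\Gl$ real in the $3$D case), and $|\tilde\Gl^*|\le 1$ comes from $p^*\le 1$.

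Next I would pass to the eigenvalue formulation, which is the cleanest route to realness. Writing the reduced matrix \eqnref{eq:matP0202} as $\tilde\Gl_1 D+B$, where $D=\mathrm{diag}((-1)^{i-1})$ and $B$ is the off-diagonal part ($B_{ij}=(r_j/r_i)^{2n}$ for $i<j$ and $B_{ij}=-1$ for $i>j$), the condition $|\tilde P_N^{(n)}|=0$ says precisely that $\tilde\Gl_1$ is an eigenvalue of $-DB$, since $\tilde\Gl_1 D+B=D(\tilde\Gl_1 I+DB)$ and $D^2=I$. Conjugating by $R=\mathrm{diag}(r_i^{-n})$ (note $R^{-1}DR=D$) sends $B$ to a real skew-symmetric matrix $C=R^{-1}BR$ with $C_{ij}=(r_j/r_i)^{n}$ for $i<j$, so the roots are exactly the eigenvalues of $-DC$ with $C$ real skew-symmetric and $D$ a signature matrix. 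This already reproduces the $2\times 2$ value $\tilde\Gl_1=\pm r_2/r_1$ and exhibits the $\pm$ pairing structurally.

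The hard part is the realness of the spectrum of $-DC$. Because the only natural pairing is the \emph{indefinite} signature $D$, the pencil $Cx=\nu Dx$ is self-adjoint only in a Krein (indefinite-metric) sense, and for such pencils eigenvalues can in principle leave the real axis in complex-conjugate pairs; realness is thus a genuine consequence of the nested geometry, not a formality. I would establish it by producing a positive-definite symmetric $G$, built from the ordered radii $0<r_N<\cdots<r_1$ (the discrete analogue of the layer-potential energy that makes the Neumann--Poincar\'e operator self-adjoint in the homogeneous case), with $(DC)^{T}G=G(DC)$; self-adjointness of $-DC$ in the $G$-inner product then forces real spectrum. The equivalent, more computational route, and the one closest to the proof of Theorem \ref{th:ellresult02}, is to derive the $2$D counterpart of the recursion in Lemma \ref{le:main01} and \eqnref{eq:rec0101} for the leading minors of $\tilde P_N^{(n)}$ and to show that the resulting family $\{\hat f_N\}$ is a Sturm (orthogonal-polynomial-like) sequence, whose roots are then automatically real and interlacing. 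In either route the crux is controlling the signs generated by the alternating diagonal together with the geometric ratios $(r_j/r_i)^{n}\in(0,1)$; this is the step I expect to be the main obstacle.

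Finally, with realness secured, the interval bound is comparatively routine. Each eigenvalue of $-DC$ is a critical value of the Rayleigh quotient $\langle (-DC)x,x\rangle_G/\langle x,x\rangle_G$, and using the nesting $0<r_{i+1}/r_i<1$ together with the telescoping of the ratios $(r_j/r_i)^{n}$ to bound the off-diagonal contribution, I would show $|\tilde\Gl_1|\le 1$, i.e. $p^*\le 1$, with the endpoints $\pm 1$ attained only in the large-radius limit $r_i\to R$ (consistent with the extreme-case analysis that singled out $\Gl=\pm 1$). Counting the $N$ eigenvalues of the $N\times N$ matrix $-DC$ (equivalently, the $\lfloor N/2\rfloor$ roots $p^*$ each producing $\pm\sqrt{p^*}$, plus $\tilde\Gl_1=0$ when $N$ is odd) then yields the claimed $N$ real roots in $[-1,1]$, completing the proof.
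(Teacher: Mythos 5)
Your reduction to the spectrum of $-DC$ (with $C=R^{-1}BR$ real skew-symmetric and $D$ the alternating signature matrix) is correct as far as it goes, and it does exhibit the $\pm$ pairing structurally; but the proposal stops exactly where the theorem lives. Realness of the spectrum is stated as a goal, not proved: you name two possible strategies --- a positive-definite symmetric $G$ with $(DC)^{T}G=G(DC)$, or a Sturm/interlacing sequence built from a two-dimensional analogue of Lemma \ref{le:main01} --- and execute neither, yourself flagging this step as ``the main obstacle.'' This is a genuine gap, not a routine omission: for a general signature $D$ the matrix $DC$ need not have real eigenvalues (take $D=I$, in which case $DC=C$ is skew-symmetric and its nonzero spectrum is purely imaginary), so everything hinges on actually constructing $G$ from the nested radii or verifying the sign pattern in the recursion, and the proposal contains neither construction. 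The same applies to the bound $|\tilde\Gl^*|\le 1$, which you derive from a Rayleigh quotient in a $G$-inner product that has not been shown to exist. As written, this is a plausible program rather than a proof.

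For comparison, the paper disposes of both points at once and bypasses the matrix analysis entirely: its argument (given in full for Theorem \ref{th:ellresult02} and invoked for the two-dimensional case) is a PDE energy argument. Assuming $\tilde\Gl^*\notin[-1,1]$ or $\Im{\tilde\Gl^*}\neq 0$, one takes a nontrivial null vector of $\tilde{P}_{N}^{(n)}(\tilde\Gl)^{T}$, builds from it a nonzero piecewise-harmonic potential $u$ decaying at infinity, with permittivity $\varepsilon_1=\varepsilon_0(\tilde\Gl-1)/(\tilde\Gl+1)$ in the odd layers, and integrates by parts to get $\int\varepsilon|\nabla u|^2=0$. If $\tilde\Gl\notin[-1,1]$ then $(\tilde\Gl-1)/(\tilde\Gl+1)>0$, all terms in the energy carry one sign, and $u\equiv 0$; if $\Im{\tilde\Gl}\neq 0$, taking imaginary parts forces the same conclusion --- contradicting $u\not\equiv 0$ in either case, so all roots are real and lie in $[-1,1]$. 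Note that this energy identity is precisely the positive-definite structure your hypothetical $G$ was meant to encode; the most direct way to complete your route would be to extract $G$ from that quadratic form, at which point you would essentially have rederived the paper's argument in matrix language.
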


\section{Numerical examples}
In this section, we shall show some numerical examples to expose our theoretical results. Let us focus on three dimensional case. By the theoretical analysis, to find the plasmon modes for any multi-layer structure, one only needs to find the roots of characteristic polynomial according to Theorem \ref{th:ellresult01}. Moreover, to show the plasmon resonance phenomena, one only needs to observe the energy outside the structure, determined by the term
$$
r^{-3}H\itbf{e}^T \Upsilon_{N} (\Gl I-K_{N}^T)^{-1} \itbf{e}
$$
according \eqnref{eq:eigenspan01} and Definition \ref{df:def01}. We shall investigate both scenarios numerically.
\subsection{Roots to characteristic polynomials}
First, we consider that intervals between each two adjacent layers are equidistance. Let $\varepsilon_0=1$. For $N$-layer structure, set
\beq\label{eq:str01}
r_i=N-i+1, \quad i=1, 2, \ldots N.
\eeq
TABLE \ref{tab:1} show the roots to the characteristic polynomial with $N=19$. In FIGURE \ref{fig:2}, we plot the values of the polynomial $f_N(q)$ in the span $[1.48, 2.0]$ (approximately the span between the third eigenvalue and last eigenvalue), it is interesting that the values of the characteristic polynomial in the span are all very small (of the order $10^{-5}$). This is surprisingly useful in real applications that this span can be used for  surface-plasmon-resonance-like (SPR-like) band.

\begin{table}[h]
\begin{center}
\caption{Roots to the characteristic polynomial with layers which are chosen by \eqnref{eq:str01}. \label{tab:1}}
\begin{tabular}{|c|cccc ccc|}
  \hline
     &  & &  & $N=19$ & &  &  \\
  \hline
  $q$             &   & 1.9794 &  & 1.9664 &  & 1.9457 &  \\
  $\Gl$             & 0 & 1.9931 & -0.9931 & 1.9888 & -0.9888 & 1.9818 & -0.9818  \\
  $\varepsilon_i$   & -2.0000 & -0.0023 & -435.9440 & -0.0038 & -265.9316 & -0.0061 & -163.6370\\
  \hline
$q$&& 1.9112 & & 1.8490 & &1.7294 &  \\
$\Gl$ && 1.9701 & -0.9701 & 1.9488 &-0.9488 & 1.9069 & -0.9069  \\
$\varepsilon_i$ &&  -0.0101 & -99.3122 &  -0.0174 & -57.5786 & -0.0320 & -31.2310 \\
\hline
$q$ && 1.4863 & & 1.0066 &  & 0.3299 &  \\
$\Gl$ && 1.8177 & -0.8177 & 1.6210 & -0.6210 & 1.2615 & -0.2615\\
$\varepsilon_i$&& -0.0647  & -15.4553 & -0.1446 & -6.9153  & -0.3265 & -3.0625\\
\hline
\end{tabular}
\end{center}
\end{table}


\begin{figure}[!h]
   \begin{center}
{\includegraphics[width=2.8in]{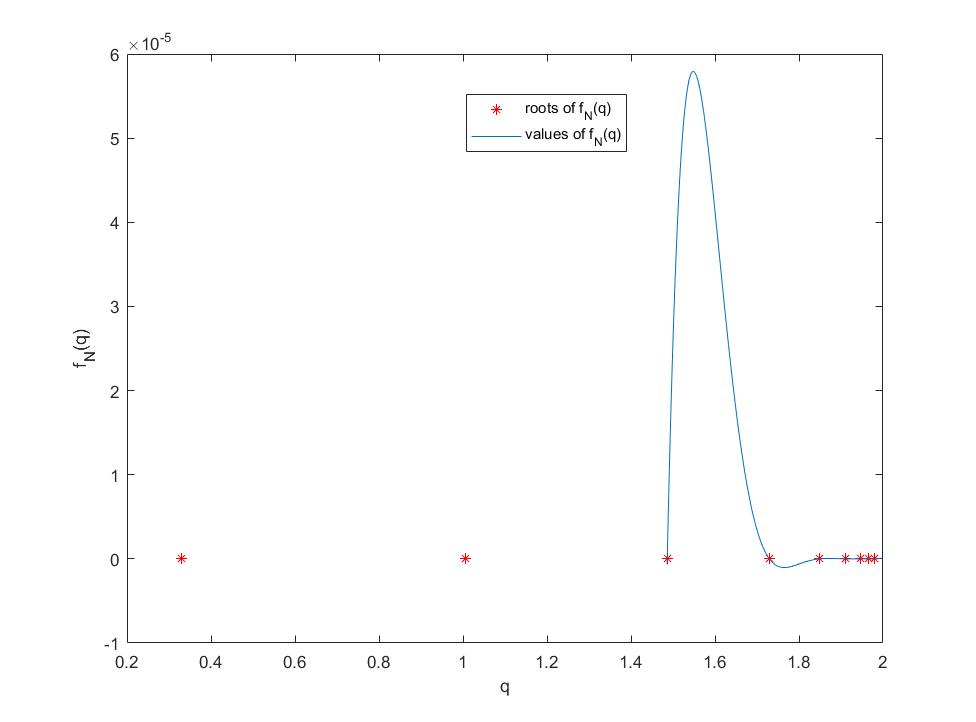}}
   \end{center}
    \caption{Values of $f_N(q)$ in the span [1.48, 2.0], $N=19$.
       }\label{fig:2}
\end{figure}

Next, we consider the radius of layers are decreasing with the same scale $s$, that is
\beq\label{eq:str02}
r_{i+1}=sr_i, \quad i=1, 2, \ldots N-1.
\eeq
Set $r_1=1$ and $s=0.8$. TABLE \ref{tab:2} exhibit all the roots. Similarly, one can also find out that the values in the span $[1.22, 1.79]$ (again approximately the span between the third eigenvalue and last eigenvalue, see FIGURE \ref{fig:3}), are all very small. Besides, it is worth mentioning that in both set up of structures, the roots $q$ are all positive values. Similar results can be found in FIGURE \ref{fig:4} and \ref{fig:5} for $N=11$ and $N=16$, respectively.

\begin{table}[h]
\begin{center}
\caption{Roots to the characteristic polynomial with layers which are chosen by \eqnref{eq:str02}. \label{tab:2}}
\begin{tabular}{|c|cccc ccc|}
  \hline
     &  & &  & $N=19$ & &  &   \\
  \hline
  $q$             &   & 1.7867 &  & 1.7707 &  & 1.7408 & \\
  $\Gl$             & 0 & 1.9271 & -0.9271 & 1.9215 & -0.9215 & 1.9110 & -0.9110 \\
  $\varepsilon_i$   & -2.0000 & -0.0249 & -40.1605 & -0.0269 & -37.2239 & -0.0306 & -32.6949 \\
  \hline
$q$&& 1.6904 & &  1.6073 & &1.4676 &  \\
$\Gl$ && 1.8930 & -0.8930 & 1.8628 & -0.8628 & 1.8106 & -0.8106 \\
$\varepsilon_i$ &&  -0.0370 & -27.0318 &  -0.0479 & -20.8683 & -0.0674 & -14.8369\\
\hline
$q$&& 1.2282 & & 0.8299 &  &0.2982 & \\
$\Gl$ && 1.7158 & -0.7158 & 1.5392 &  -0.5392 & 1.2404 & -0.2404\\
$\varepsilon_i$ & & -0.1046  & -9.5571 & -0.1815 & -5.5104 & -0.3390 &-2.9494\\
\hline
\end{tabular}
\end{center}
\end{table}
\begin{figure}[!h]
   \begin{center}
{\includegraphics[width=2.8in]{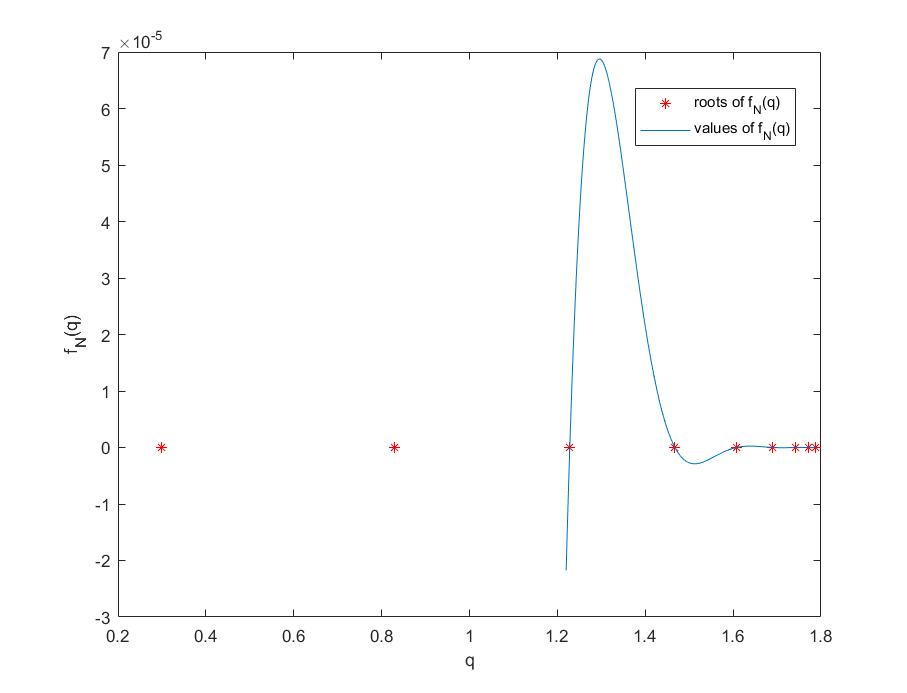}}
   \end{center}
    \caption{Values of $f_N(q)$ in the span [1.22, 1.79], $N=19$.
       }\label{fig:3}
\end{figure}

\begin{figure}[!h]
   \begin{center}
{\includegraphics[width=4.8in]{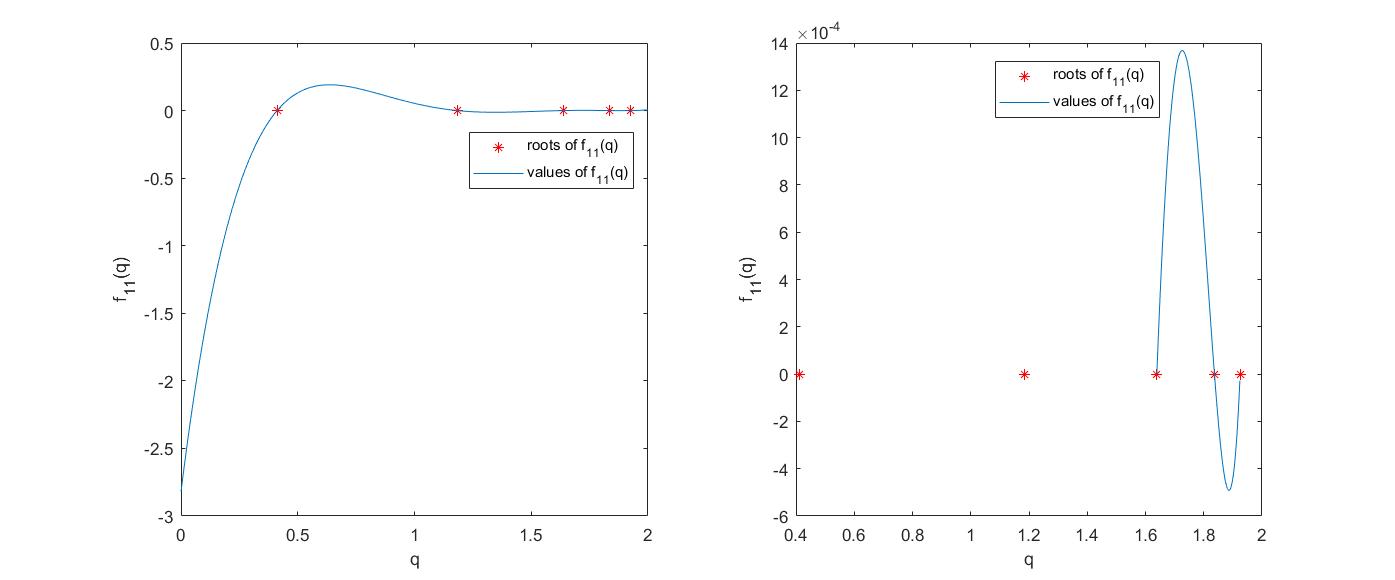}}
   \end{center}
    \caption{Values of $f_N(q)$, $N=11$.
       }\label{fig:4}
\end{figure}

\begin{figure}[!h]
   \begin{center}
{\includegraphics[width=4.8in]{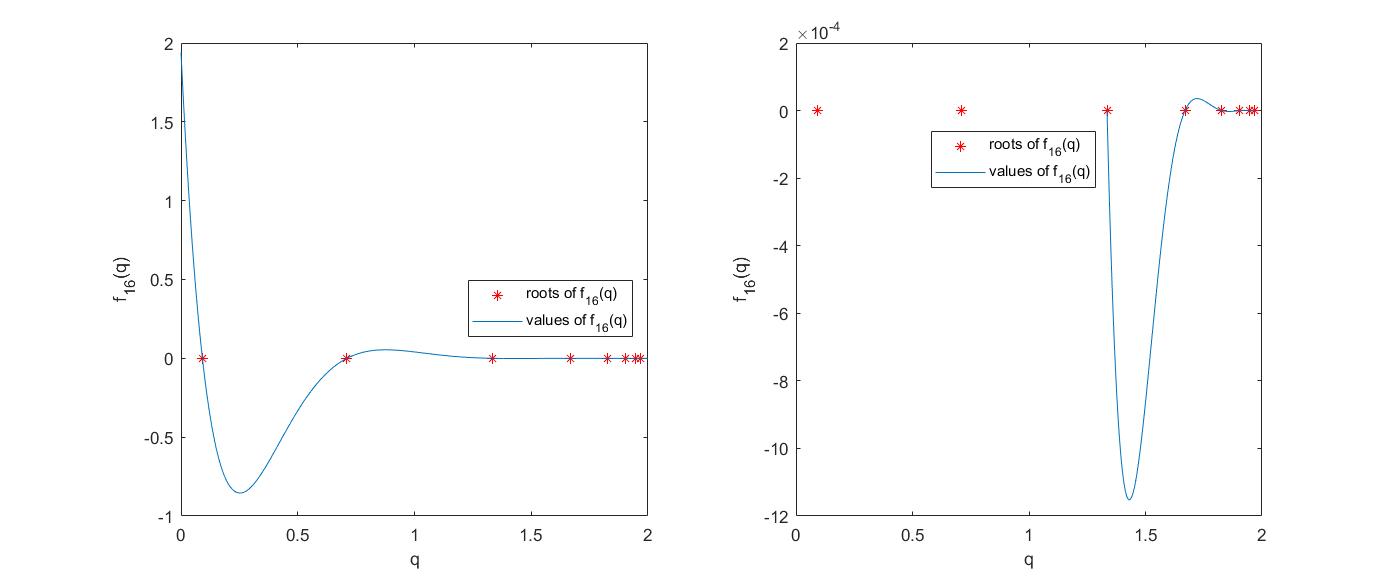}}
   \end{center}
    \caption{Values of $f_N(q)$, $N=16$.
       }\label{fig:5}
\end{figure}
\subsection{Plasmon resonance illustration}
We introduce the Drude model for modeling the parameter $\varepsilon_c$ with the angular frequency $\omega$
(see e.g. \cite{SC10}). In \eqnref{eq:vepdef01}, $\varepsilon_i$ are replaced by
\beq\label{eq:Drude}
\varepsilon_i=\varepsilon'(1-\frac{\omega_p^2}{\omega(\omega+i\tau)}), \quad i \quad \mbox{is odd},
\eeq
where we suppose that (cf. \cite{ADKLMZ})
$$\tau = 10^{14}\, s^{-1}; \varepsilon' = 9 \cdot 10^{-12} F \, m^{-1}; \varepsilon_0 = (1.33)^2 \varepsilon'; \omega_p = 2 \cdot 10^{15} s^{-1}.$$
We define the polarization tensor here by
\beq\label{eq:defpol0101}
\mathbf{M}:=r_1^{-3}\Upsilon_{N} (\Gl I-K_{N}^T)^{-1}.
\eeq
The functionality of $r_1^{-3}$ appearing in \eqnref{eq:defpol0101} is to reduce the scale of the structure.
In FIGURE \ref{fig:6}, we show the norm of polarization tensor defined in \eqnref{eq:defpol0101} with the multi-layer structure designed by \eqnref{eq:str01}, where $N=17$. It can be seen that the peeks of the norm of polarization tensor are in accordance with the plasmon modes in the setup of physical Drude model. The norm of polarization tensor decays as the frequency $\omega$ is getting small. This is due to the existence of lossy parameter $\tau$.
\begin{figure}[!h]
   \begin{center}
{\includegraphics[width=2.8in]{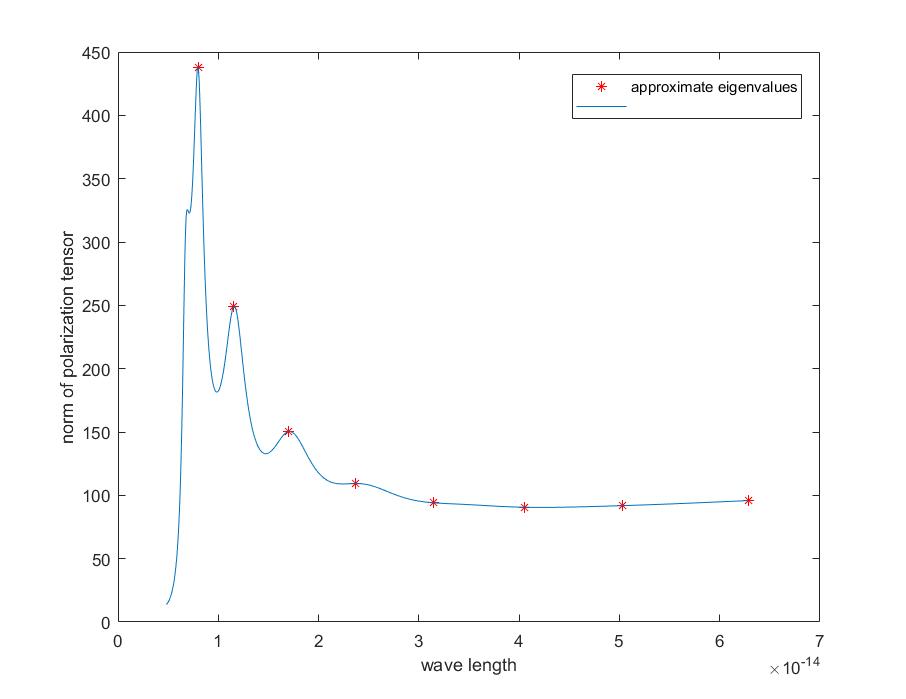}}
   \end{center}
    \caption{Norm of polarization tensor with layers which are chosen by \eqnref{eq:str01}.
       }\label{fig:6}
\end{figure}

\section{Concluding remarks}
We considered plasmon resonance in multi-layer structures. In order to better exhibit the main idea and theoretical results, we
use the simplest conductivity problem associated with uniformly distributed background field for derivation of the matrix $P_N(\Gl)$, together
with its characteristic polynomial. We mention that the idea can be extended to more general electro-magnetic system with some technique adjustments.
We shall derive some similar results regarding the plasmon modes in multi-layer structures for more complicated systems in forthcoming works. Localized resonance of multi-layer structures for elastic problem is another interesting and challenging problem. It is also worthwhile mentioning that the theoretical results can greatly help to find all the plasmon modes for any given multi-layer structures. More numerical observations and justifications will be
implemented, especially the searching for SPR-like band will be verified in more different set up of multi-layer structures. Finally, we also want to mention that multi-layer structures have also been used to generate Generalized Polarization Tensors(GPTs) vanishing structures for enhancement of near cloaking, see e.g., \cite{AGJKLM12,AKLL11,AKLL13}. The authors there designed sophisticated multi-layer structures in order to make the lower orders of GPTs vanishing. However, the existence of the vanishing structures is remain open. The design of multi-layer structures together with the theoretical results in this paper might be a possible way to prove the existence of GPTs vanishing structures.

\section{Proof of main results}
\subsection{Proof of Theorem \ref{th:solmain01}}
The transmission conditions on the interface $\{ r=r_j\}$, $j=1, 2, \ldots N$, imply that
\beq\label{eq:trans01}
\left\{
 \begin{split}
 &a_{j, m} r_j + b_{j, m}r_j^{-2} = a_{j-1, m} r_{j} + b_{j-1, m}r_{j}^{-2} , \\
 &\varepsilon_{j} \left( a_{j, m}  - 2b_{j, m}r_j^{-3}
\right) = \varepsilon_{j-1} \left( a_{j-1, m}  - 2b_{j-1, m} r_{j}^{-3}\right)
 \end{split}
\right.
 \eeq
 where we set $b_{N, m}=0$, $m=-1,0,1$.
By using \eqnref{eq:deflamb01} and some proper arrangements to the equations \eqnref{eq:trans01}, one obtains that
\begin{align*}
&\Gl_1(a_{1,m}-a_{0,m})+2\sum_{j=2}^{N} (a_{j, m}-a_{j-1,m})\Big(\frac{r_j}{r_1}\Big)^3=a_{0, m}, \\
&-\sum_{j=1}^{l-1}(a_{j, m}-a_{j-1,m})+\Gl_l(a_{l,m}-a_{l-1,m})+2\sum_{j=l+1}^{N}(a_{j, m}-a_{j-1,m})\Big(\frac{r_j}{r_l}\Big)^3=a_{0,m}, \quad l=2,3,\ldots,N-1,\\
&-\sum_{j=1}^{N-1}(a_{j, m}-a_{j-1,m})+\Gl_{N}(a_{N,m}-a_{N-1,m})=a_{0,m},
\end{align*}
for $m=-1, 0, 1$.
In the case that the material parameters $\varepsilon_j$, $j=1,2,\ldots N$ are all positive, the matrix $P_{N}$ is invertible and then there holds
\beq\label{eq:a}
\itbf{a}_m = a_{0,m}(\Xi(P_{N}^T)^{-1}\itbf{e} +\itbf{e}),
\eeq
where $\itbf{a}_m:=(a_{1,m},a_{2,m},\ldots,a_{N,m})^T$ and
\beq\label{eq:defxi01}
\Xi=
\begin{bmatrix}
1 & 0 & 0 &\cdots & 0 \\
1 & 1 & 0 &\cdots & 0 \\
\vdots & \vdots &\vdots &\ddots & \vdots\\
1 & 1  & 1 &\cdots & 1
\end{bmatrix}.
\eeq
Furthermore, by the first equation in \eqnref{eq:trans01}, there holds
\beq\label{eq:b}
\Bb_m = a_{0,m}\Xi^T \Upsilon_{N} (P_{N}^T)^{-1} \itbf{e}
\eeq
where $\Bb_m:=(b_{0,m},b_{1,m},  \ldots ,b_{N-1,m})^T$. The proof is complete by extracting the first element in $\Bb_m$.

\subsection{Proof of Lemma \ref{le:main01}}
Denote by $\mathbf{E}_{i,j}$  the elementary matrix which is transform of identity matrix via adding the $j$-th line of identity matrix to the $i$-th line.
Then by using some elementary translation, we compute
\beq
\begin{split}
|P_{N}|= &\left|\mathbf{E}_{N,-(N-1)}\mathbf{E}_{1,-2*t^1_2}P_N\mathbf{E}_{1,-2}\mathbf{E}_{N,(N-1)*t^{N-1}_N}\right|\\
 =&\left|\begin{array}{ccccc}
 \Gl_1+(\Gl_2-1)t^1_{2} & -\Gl_{2}-1 & \cdots& 0& 0 \\
(2-\Gl_{2})t^1_{2} & \Gl_{2} &\cdots& -1 & 0 \\
\vdots & \vdots & \ddots &\vdots & \vdots\\
0 & 2t^2_{2N-1} & \cdots& \Gl_{N-1} & -\Gl_{N-1}-1 \\
0 & 0 &\cdots& (2-\Gl_{N-1})t^{N-1}_{N} & \Gl_{N}+(\Gl_{N-1}-1)t^{N-1}_{N}
 \end{array}
 \right|
\end{split}
\eeq
where we only changed the first line and column, together with the last line and column. By using the notation \eqnref{eq:matP02} and the expansion theorem for determinant one thus has
\beq
\begin{split}
|P_{N}|=&(\Gl_1+(\Gl_2-1)t^1_{2})(\Gl_{N}+(\Gl_{N-1}-1)t^{N-1}_{N})|P^2_{N-1}| \\
&-(\Gl_1+(\Gl_2-1)t^1_{2})(\Gl_{N-1}+1)(\Gl_{N-1}-2)t^{N-1}_{N}|P^2_{N-2}|\\
&-(\Gl_{2}-2)t^1_{2}(\Gl_{2}+1)(\Gl_{N}+(\Gl_{N-1}-1)t^{N-1}_{N})|P^3_{N-1}|\\
&+(\Gl_{2}-2)t^1_{2}(\Gl_{2}+1)(\Gl_{N-1}+1)(\Gl_{N-1}-2)t^{N-1}_{N}|P^3_{N-2}|
\end{split}
\eeq
holds for all $N\geq 4$, $N\in \mathbb{N}$.

\subsection{Proof of Theorem \ref{th:ellresult01}}
We shall make use of induction. It is obvious that $|P_1(\Gl)|=\Gl$ which is contained in \eqnref{eq:maincong01} , by using \eqnref{eq:rec0101}, one has the assertion for $N=2$ and $N=3$. Now we suppose that \eqnref{eq:maincong01} holds for all $N\leq N_0$, $N_0\geq 4$, we show that it also holds for $N=N_0+1$. Note that $q(1-\Gl)=q(\Gl)$, in what follows, we shall write $q$ instead of $q(\Gl)$ for simplicity.
\begin{enumerate}
  \item [Case i)] $N_0$ is even. By making use of \eqnref{eq:deter0102} and \eqnref{eq:maincong01} for all $N\leq N_0$, one has
\beq
\begin{split}
|P_{N}(\Gl)|=&\Gl^2(1-t^1_{2})(1-t^{N-1}_{N})|P^2_{N-1}(1-\Gl)| +\Gl(1-t^1_{2})(2-\Gl)(\Gl+1)t^{N-1}_{N}|P^2_{N-2}(1-\Gl)|\\
&+t^1_{2}(2-\Gl)(\Gl+1)\Gl(1-t^{N-1}_{N})|P^3_{N-1}(\Gl)|+(2-\Gl)^2(\Gl+1)^2t^1_{2}t^{N-1}_{N}|P^3_{N-2}(\Gl)|\\
=&\sum_{j=1}^4 d_j \mathcal{D}_j,
\end{split}
\eeq
where we use the notations
\[
\begin{split}
&d_1=\Gl^2(1-t^1_{2})(1-t^{N-1}_{N})(-1)^{N_0/2-1}(1-\Gl), \quad d_2=\Gl(1-t^1_{2})(2-\Gl)(\Gl+1)t^{N-1}_{N}(-1)^{N_0/2-1}, \\
&d_3=t^1_{2}(2-\Gl)(\Gl+1)\Gl(1-t^{N-1}_{N})(-1)^{N_0/2-1}, \quad d_4=(2-\Gl)^2(\Gl+1)^2t^1_{2}t^{N-1}_{N}(-1)^{N_0/2}\Gl,
\end{split}
\]
and
\[
\mathcal{D}_j=\left(\sum_{k=0}^{N_0/2-1-\lfloor j/4\rfloor} 2^k q^{N_0/2-k-1-\lfloor j/4\rfloor}\left(\sum_{\mathbf{i}_{2k}\in C_{N-2-\lfloor j/2\rfloor}^{\lfloor (j+1)/2\rfloor,2k}}\tau_{\mathbf{i}_{2k}}\prod_{l=1}^k t^{i_{2l-1}}_{i_{2l}}\right)\right).
\]
By setting
\[
\begin{split}
&\mathfrak{a}_k:=\sum_{\mathbf{i}_{2k}\in C_{N-2}^{1,2k}}\tau_{\mathbf{i}_{2k}}\prod_{l=1}^k t^{i_{2l-1}}_{i_{2l}},
\quad \mathfrak{b}_k:=\sum_{\mathbf{i}_{2k}\in C_{N-3}^{1,2k}}\tau_{\mathbf{i}_{2k}}\prod_{l=1}^k t^{i_{2l-1}}_{i_{2l}},\\
&\mathfrak{c}_k:=\sum_{\mathbf{i}_{2k}\in C_{N-3}^{2,2k}}\tau_{\mathbf{i}_{2k}}\prod_{l=1}^k t^{i_{2l-1}}_{i_{2l}},
\quad \mathfrak{d}_k:=\sum_{\mathbf{i}_{2k}\in C_{N-4}^{2,2k}}\tau_{\mathbf{i}_{2k}}\prod_{l=1}^k t^{i_{2l-1}}_{i_{2l}}.
\end{split}
\]
and some straight forward computations one has
\[
\begin{split}
|P_{N}(\Gl)|=&(-1)^{N_0/2}\Gl\Big(q(1-t^1_{2})(1-t^{N-1}_{N})\left(\sum_{k=0}^{N_0/2-1} 2^k q^{N_0/2-k-1}\mathfrak{a}_k\right)\\
&\quad\quad\quad\quad\quad+(q-2)(1-t^1_{2})t^{N-1}_{N}\left(\sum_{k=0}^{N_0/2-1} 2^k q^{N_0/2-k-1}\mathfrak{b}_k\right)\\
&\quad\quad\quad\quad\quad+(q-2)t^1_{2}(1-t^{N-1}_{N})\left(\sum_{k=0}^{N_0/2-1} 2^k q^{N_0/2-k-1}\mathfrak{c}_k\right)\\
&\quad\quad\quad\quad\quad+(q-2)^2t^1_{2}t^{N-1}_{N}\left(\sum_{k=0}^{N_0/2-2} 2^k q^{N_0/2-k-2}\mathfrak{d}_k\right)\Big)\\
=:&(-1)^{N_0/2}\Gl\left(\sum_{k=0}^{N_0/2}2^kq^{N_0/2-k}g_k\right),
\end{split}
\]
where it can be seen that
\beq
g_0=(1+t^1_{2}t^{N-1}_{N}-t^1_{2}-t^{N-1}_{N})+(1-t^1_{2})t^{N-1}_{N}+t^1_{2}(1-t^{N-1}_{N})+t^1_{2}t^{N-1}_{N}=1,
\eeq
and
\beq
\begin{split}
g_1=&(1+t^1_{2}t^{N-1}_{N}-t^1_{2}-t^{N-1}_{N})\left(\sum_{\mathbf{i}_1\in C_{N-2}^{1,2}}\tau_{\mathbf{i}_1}t^{i_{1}}_{i_{2}}\right)+(1-t^1_{2})t^{N-1}_{N}\left(\sum_{\mathbf{i}_1\in C_{N-3}^{1,2}}\tau_{\mathbf{i}_1}t^{i_{1}}_{i_{2}}\right)\\
&+t^1_{2}(1-t^{N-1}_{N})\left(\sum_{\mathbf{i}_1\in C_{N-3}^{2,2}}\tau_{\mathbf{i}_1}t^{i_{1}}_{i_{2}}\right)+t^1_{2}t^{N-1}_{N}\left(\sum_{\mathbf{i}_1\in C_{N-4}^{2,2}}\tau_{\mathbf{i}_1}t^{i_{1}}_{i_{2}}\right)\\
&-(1-t^1_{2})t^{N-1}_{N}-t^1_{2}(1-t^{N-1}_{N})-2t^1_{2}t^{N-1}_{N}\\
=&\sum_{\mathbf{i}_1\in C_{N}^{0,2}}\tau_{\mathbf{i}_1}t^{i_{1}}_{i_{2}},
\end{split}
\eeq
by using the relations
\[
\begin{split}
\mathfrak{a}_1=\mathfrak{b}_1+\sum_{j=2}^{N-2}(-1)^{j+N-1}t^j_{N-1}, \quad \mathfrak{a}_1=\mathfrak{c}_1+\sum_{j=3}^{N-1}(-1)^{2+j}t^2_j,\\
\mathfrak{b}_1=\mathfrak{d}_1+\sum_{j=3}^{N-2}(-1)^{2+j}t^2_j, \quad \mathfrak{c}_1=\mathfrak{d}_1+\sum_{j=3}^{N-2}(-1)^{j+N-1}t^j_{N-1}.
\end{split}
\]
One can also find out that
\[
\begin{split}
g_{N_0/2}=&-(1-t^1_{2})t^{N-1}_{N}b_{N_0/2-1}-t^1_{2}(1-t^{N-1}_{N})c_{N_0/2-1}+t^1_2t^{N-1}_Nd_{N_0/2-2}\\
=&-(1-t^1_{2})t^{N-1}_{N}\left(\sum_{\mathbf{i}_{N_0/2-1}\in C_{N-3}^{1,N_0-2}}\tau_{\mathbf{i}_{N_0/2-1}}\prod_{l=1}^{N_0/2-1} t^{i_{2l-1}}_{i_{2l}}\right)\\
&-t^1_{2}(1-t^{N-1}_{N})\left(\sum_{\mathbf{i}_{N_0/2-1}\in C_{N-3}^{2,N_0-2}}\tau_{\mathbf{i}_{N_0/2-1}}\prod_{l=1}^{N_0/2-1} t^{i_{2l-1}}_{i_{2l}}\right)\\
&+t^1_2t^{N-1}_N\left(\sum_{\mathbf{i}_{N_0/2-2}\in C_{N-4}^{2,N_0-4}}\tau_{\mathbf{i}_{N_0/2-2}}\prod_{l=1}^{N_0/2-2} t^{i_{2l-1}}_{i_{2l}}\right)\\
=&(t^1_{2}t^{N-1}_{N}-t^{N-1}_{N})(-1)^{N_0/2-1}t^2_3t^4_5\cdots t^{N-3}_{N-2}+(t^1_{2}t^{N-1}_{N}-t^{1}_{2})(-1)^{N_0/2-1}t^3_4t^5_6\cdots t^{N-2}_{N-1}\\
&+t^1_2t^{N-1}_N\left(\sum_{\mathbf{i}_{N_0/2-2}\in C_{N-4}^{2,N_0-4}}\tau_{\mathbf{i}_{N_0/2-2}}\prod_{l=1}^{N_0/2-2} t^{i_{2l-1}}_{i_{2l}}\right)\\
=&\sum_{\mathbf{i}_{N_0/2}\in C_{N}^{0,N_0}}\tau_{\mathbf{i}_{N_0/2}}\prod_{l=1}^{N_0/2} t^{i_{2l-1}}_{i_{2l}}.
\end{split}
\]
Furthermore, one can readily derive that
\[
\begin{split}
g_k=&(1+t^1_{2}t^{N-1}_{N}-t^1_{2}-t^{N-1}_{N})\mathfrak{a}_k+(1-t^1_{2})t^{N-1}_{N}(\mathfrak{b}_k-\mathfrak{b}_{k-1})\\
&+t^1_{2}(1-t^{N-1}_{N})(\mathfrak{c}_k-\mathfrak{c}_{k-1})+t^1_{2}t^{N-1}_{N}(\mathfrak{d}_k-2\mathfrak{d}_{k-1}+\mathfrak{d}_{k-2}),
\end{split}
\]
for all $k=2, 3, \ldots, N_0/2-1$.
Thus there holds
\beq\label{eq:solpfmn01}
\begin{split}
g_k=&(1+t^1_{2}t^{N-1}_{N}-t^1_{2}-t^{N-1}_{N})\left(\mathfrak{a}_k-\mathfrak{c}_k-\mathfrak{b}_k+\mathfrak{d}_k\right)\\
&+(1-t^1_{2})\left(\mathfrak{b}_k-\mathfrak{d}_k\right)+(1-t^{N-1}_{N})\left(\mathfrak{c}_k-\mathfrak{d}_k\right)+\mathfrak{d}_k\\
&+(t^1_2t^{N-1}_N-t^{N-1}_N)\left(\mathfrak{b}_{k-1}-\mathfrak{d}_{k-1}\right)+(t^1_2t^{N-1}_N-t^{1}_2)\left(\mathfrak{c}_{k-1}-\mathfrak{d}_{k-1}\right)\\
&-(t^1_2+t^{N-1}_N)\mathfrak{d}_{k-1}+t^1_{2}t^{N-1}_{N}\mathfrak{d}_{k-2},
\end{split}
\eeq
for $k=2, 3, \ldots, N_0/2-1$.
By straight forward computations one has
\beq\label{eq:solpfmn02}
\mathfrak{b}_k-\mathfrak{d}_k=\sum_{\mathbf{i}_{2k-1}\in C_{N-4}^{2,2k-1}}\tau_{\mathbf{i}_{2k-1}}t^2_{i_{1}}\prod_{l=1}^{k-1} t^{i_{2l}}_{i_{2l+1}},
\quad \mathfrak{c}_k-\mathfrak{d}_k=\sum_{\mathbf{i}_{2k-1}\in C_{N-4}^{2,2k-1}}\tau_{\mathbf{i}_{2k-1}}\prod_{l=1}^{k-1} t^{i_{2l-1}}_{i_{2l}}t^{i_{2k-1}}_{N-1},
\eeq
and
\beq\label{eq:solpfmn03}
\mathfrak{a}_k-\mathfrak{c}_k-\mathfrak{b}_k+\mathfrak{d}_k=\sum_{\mathbf{i}_{2k-2}\in C_{N-4}^{2,2k-2}}\tau_{\mathbf{i}_{2k-2}}t^2_{i_{1}}\prod_{l=1}^{k-2} t^{i_{2l}}_{i_{2l+1}}t^{i_{2k-2}}_{N-1}.
\eeq
By substituting \eqnref{eq:solpfmn02} and \eqnref{eq:solpfmn03} into \eqnref{eq:solpfmn01} and simply computations,  one thus obtains
\beq
\begin{split}
g_k=&\mathfrak{d}_k+\sum_{\mathbf{i}_{2k-2}\in C_{N-4}^{2,2k-2}}(-1)^{N+1}\tau_{\mathbf{i}_{2k-2}}t^2_{i_{1}}\prod_{l=1}^{k-2} t^{i_{2l}}_{i_{2l+1}}t^{i_{2k-2}}_{N-1}+\sum_{\mathbf{i}_{2k-2}\in C_{N-4}^{2,2k-2}}(-1)^{N+1}\tau_{\mathbf{i}_{2k-2}}t^1_{i_{1}}\prod_{l=1}^{k-2} t^{i_{2l}}_{i_{2l+1}}t^{i_{2k-2}}_{N}\\
&\quad+\sum_{\mathbf{i}_{2k-2}\in C_{N-4}^{2,2k-2}}(-1)^N\tau_{\mathbf{i}_{2k-2}}t^1_{i_{1}}\prod_{l=1}^{k-2} t^{i_{2l}}_{i_{2l+1}}t^{i_{2k-2}}_{N-1}+\sum_{\mathbf{i}_{2k-2}\in C_{N-4}^{2,2k-2}}\tau_{\mathbf{i}_{2k-2}}(-1)^{N+2}t^2_{i_{1}}\prod_{l=1}^{k-2} t^{i_{2l}}_{i_{2l+1}}t^{i_{2k-2}}_{N}
\end{split}
\eeq
\[
\begin{split}
&+\sum_{\mathbf{i}_{2k-1}\in C_{N-4}^{2,2k-1}}(-1)^2\tau_{\mathbf{i}_{2k-1}}t^2_{i_{1}}\prod_{l=1}^{k-1} t^{i_{2l}}_{i_{2l+1}}+\sum_{\mathbf{i}_{2k-1}\in C_{N-4}^{2,2k-1}}(-1)\tau_{\mathbf{i}_{2k-1}}t^1_{i_{1}}\prod_{l=1}^{k-1} t^{i_{2l}}_{i_{2l+1}}\\
&+\sum_{\mathbf{i}_{2k-1}\in C_{N-4}^{2,2k-1}}(-1)^{N-1}\tau_{\mathbf{i}_{2k-1}}\prod_{l=1}^{k-1} t^{i_{2l-1}}_{i_{2l}}t^{i_{2k-1}}_{N-1}+\sum_{\mathbf{i}_{2k-1}\in C_{N-4}^{2,2k-1}}(-1)^N\tau_{\mathbf{i}_{2k-1}}\prod_{l=1}^{k-1} t^{i_{2l-1}}_{i_{2l}}t^{i_{2k-1}}_{N}
\end{split}
\]
\[
\begin{split}
\quad\quad\quad&+\sum_{\mathbf{i}_{2k-3}\in C_{N-4}^{2,2k-3}}(-1)^{2N}\tau_{\mathbf{i}_{2k-3}}t^1_{i_{1}}\prod_{l=1}^{k-2} t^{i_{2l}}_{i_{2l+1}}t^{N-1}_{N}+\sum_{\mathbf{i}_{2k-3}\in C_{N-4}^{2,2k-3}}(-1)^{2N+1}\tau_{\mathbf{i}_{2k-3}}t^2_{i_{1}}\prod_{l=1}^{k-2} t^{i_{2l}}_{i_{2l+1}}t^{N-1}_{N}\\
\quad\quad\quad&+\sum_{\mathbf{i}_{2k-3}\in C_{N-4}^{2,2k-3}}(-1)^{N+3}\tau_{\mathbf{i}_{2k-3}}t^1_{2}\prod_{l=1}^{k-2} t^{i_{2l-1}}_{i_{2l}}t^{i_{2k-1}}_{N}
+\sum_{\mathbf{i}_{2k-3}\in C_{N-4}^{2,2k-3}}(-1)^{N+2}\tau_{\mathbf{i}_{2k-3}}t^1_{2}\prod_{l=1}^{k-2} t^{i_{2l-1}}_{i_{2l}}t^{i_{2k-1}}_{N-1}
\end{split}
\]
\[
\begin{split}
&+\sum_{\mathbf{i}_{2k-2}\in C_{N-4}^{2,2k-2}}(-1)^3\tau_{\mathbf{i}_{2k-2}}t^1_2\prod_{l=1}^{k-1} t^{i_{2l-1}}_{i_{2l}}+\sum_{\mathbf{i}_{2k-2}\in C_{N-4}^{2,2k-2}}(-1)^{2N-1}\tau_{\mathbf{i}_{2k-2}}\prod_{l=1}^{k-1} t^{i_{2l-1}}_{i_{2l}}t^{N-1}_N\\
&+\sum_{\mathbf{i}_{2k-4}\in C_{N-4}^{2,2k-4}}(-1)^{2N+2}\tau_{\mathbf{i}_{2k-4}}t^1_2\prod_{l=1}^{k-2} t^{i_{2l-1}}_{i_{2l}}t^{N-1}_N\\
=&\sum_{\mathbf{i}_{2k}\in C_{N}^{0,2k}}\tau_{\mathbf{i}_{2k}}\prod_{l=1}^k t^{i_{2l-1}}_{i_{2l}},
\end{split}
\]
which competes the proof for the case that $N_0$ is even.
  \item [Case ii)] $N_0$ is odd. The proof is similar to the case that $N_0$ is even. We shall only sketch the main ingredients. First, it can be derived that
\[
\begin{split}
|P_{N}(\Gl)|=&(-1)^{N/2}\Big(q(1-t^1_{2})(1-t^{N-1}_{N})\left(\sum_{k=0}^{N/2-1} 2^k q^{N/2-k-1}\mathfrak{a}_k\right)\\
&\quad\quad\quad\quad\quad+q(q-2)(1-t^1_{2})t^{N-1}_{N}\left(\sum_{k=0}^{N/2-2} 2^k q^{N/2-k-2}\mathfrak{b}_k\right)\\
&\quad\quad\quad\quad\quad+q(q-2)t^1_{2}(1-t^{N-1}_{N})\left(\sum_{k=0}^{N/2-2} 2^k q^{N/2-k-2}\mathfrak{c}_k\right)\\
&\quad\quad\quad\quad\quad+(q-2)^2t^1_{2}t^{N-1}_{N}\left(\sum_{k=0}^{N/2-2} 2^k q^{N/2-k-2}\mathfrak{d}_k\right)\Big)\\
=&(-1)^{N/2}\left(\sum_{k=0}^{N/2}2^kq^{N/2-k}h_k\right).
\end{split}
\]
Similarly, one can verify that $h_0=1$ and
$$h_1=\sum_{\mathbf{i}_1\in C_{N}^{0,2}}\tau_{\mathbf{i}_1}t^{i_{1}}_{i_{2}}, \quad h_{N/2}=t^1_2t^3_4\ldots t^{N-1}_N.$$
One can readily find out that $h_k$, $k=2, 3, \ldots, N/2-1$ have the same form with \eqnref{eq:solpfmn01}, thus there holds
$$
h_k=\sum_{\mathbf{i}_{2k}\in C_{N}^{0,2k}}\tau_{\mathbf{i}_{2k}}\prod_{l=1}^k t^{i_{2l-1}}_{i_{2l}}, \quad k=2, 3, \ldots, N/2-1,
$$
which completes the proof.
\end{enumerate}
\subsection{Proof of Theorem \ref{th:ellresult02}}
We assume on contrary that
$$q^*\not\in [-1/4,2], \quad \mbox{or} \quad \Im {q^*}\neq 0.$$
Note that $\Gl^2-\Gl=q^*$, it is equivalent that
 $$\Gl\not\in [-1,2], \quad \mbox{or} \quad \Im {\Gl}\neq 0.$$
Since $\Gl$ satisfies $|P_N(\Gl)|=0$. There exists non-trial solution to the following linear equations
\beq
P_N(\Gl)^T \By=\mathbf{0},
\eeq
where $\By\in \RR^N$ and $\By\neq \mathbf{0}$. Next, for $\Bx=(\Bx_1,\Bx_2,\Bx_3)\in \RR^3$, define
\beq\label{layerstr01u01}
u=\left\{
\begin{split}
&a_{N}\Bx_1, \quad \Bx\in A_N,\\
&a_{j}\Bx_1 + b_{j}\frac{\Bx_1}{|\Bx|^3}, \quad \Bx\in A_j,\quad j=N-1, N-2, \ldots , 1\\
&b_{0}\frac{\Bx_1}{|\Bx|^3}, \quad \Bx\in A_{0},
\end{split}
\right.
\eeq
where $\mathbf{a}=(a_1, a_2, \ldots, a_N)$ and $\mathbf{b}=(b_0, b_1, \ldots, b_{N-1})$ are determined by
$$
\mathbf{a}=\Xi\By, \quad \mathbf{b}=\Xi^T\Upsilon_N\By,
$$
where $\Upsilon_N$ is defined in \eqnref{eq:matU01}.
It can then be verified that $u$ is the solution to
\beq\label{eq:mainmd01ap01}
\left\{
\begin{array}{ll}
\nabla\cdot \varepsilon\nabla u =0, & \mbox{in} \quad \RR^3\\
u=\Ocal(|\Bx|^{-1}), & |\Bx|\rightarrow \infty,
\end{array}
\right.
\eeq
with
\beq\label{eq:permidf0101}
\varepsilon(\Bx)=\varepsilon_c(\Bx)\chi(D)+\varepsilon_0\chi(\RR^3\setminus\overline{D}),
\eeq
and the parameter $\varepsilon_c$ is given by
\beq\label{eq:vepdef01ap01}
\varepsilon_c
=\left\{
\begin{array}{ll}
\frac{\Gl-2}{\Gl+1}\varepsilon_0, & \mbox{in}\quad A_j, \quad j \quad \mbox{is odd},\\
\varepsilon_0, & \mbox{in}\quad A_j, \quad j \quad \mbox{is even}.
\end{array}
\right.
\eeq
Note that $u$ is not identically zero in $\RR^3$. In fact, if $u\equiv0$, then there holds that
\beq\label{eq:eq0101}
\mathbf{a}+\Upsilon_N^{-1}M\mathbf{b}=0, \quad b_0=0,
\eeq
where
\beq\label{eq:defM01}
M=
\begin{bmatrix}
0 & 1 & 0 &\cdots & 0 \\
0 & 0 & 1 &\cdots & 0 \\
\vdots & \vdots &\vdots &\ddots & \vdots\\
0 & 0 & 0 & \cdots & 1\\
0 & 0  & 0 &\cdots & 0
\end{bmatrix}.
\eeq
Furthermore, by transmission conditions on the boundary of each layer, there holds,
\beq\label{eq:eq0102}
(I-M)\mathbf{b}=\Upsilon_N (I-M^T)\mathbf{a}.
\eeq
By combining \eqnref{eq:eq0101} and \eqnref{eq:eq0102}, one thus obtains that
\beq
(\Upsilon_N-M\Upsilon_N M^T)\mathbf{a}=\mathbf{0},
\eeq
which indicates that $\mathbf{a}=\mathbf{0}$. This is impossible since $\mathbf{a}=\Xi\By$ and $\By\neq \mathbf{0}$. Thus $u$ is not identically zero in $\RR^3$.

On the other hand, one can use integral by parts to compute
\beq
\int_{\RR^3}\varepsilon |\nabla u|^2=\sum_{j=0}^N\int_{A_j}\varepsilon |\nabla u|^2=0.
\eeq
But by using \eqnref{eq:vepdef01ap01} there also holds
\beq\label{eq:keyint01}
0=\int_{\RR^3}\varepsilon |\nabla u|^2=\varepsilon_0\sum_{j=0}^{\lfloor N/2 \rfloor}\int_{A_{2j}}|\nabla u|^2+ \frac{\Gl-2}{\Gl+1}\sum_{j=0}^{\lfloor (N+1)/2 \rfloor}\int_{A_{2j-1}}|\nabla u|^2.
\eeq
\begin{itemize}
  \item [Case i.] $\Gl\not\in [-1,2]$. It is obvious that
$$
\frac{\Gl-2}{\Gl+1}>0.
$$
Thus one immediately has $\nabla u=0$ in $\RR^3$ and thus $u=C$ in $\RR^3$, by the decay behavior of $u$ at infinity one thus show that $u\equiv0$ in $\RR^3$.
  \item [Case ii.] $\Im {\Gl}\neq 0$. Then by using \eqnref{eq:keyint01} one has
\beq\label{eq:keyint02}
\Im{\left(\frac{\Gl-2}{\Gl+1}\right)}\sum_{j=0}^{\lfloor (N+1)/2 \rfloor}\int_{A_{2j-1}}|\nabla u|^2=0.
\eeq
Thus $u=C$ in $A_{2j-1}$, $j=1, 2, \ldots \lfloor (N+1)/2 \rfloor$. By using integral by parts, one can readily show that $u=0$ in $A_0$ and thus $C=0$. Thus $u\equiv0$ in $\RR^3$.
\end{itemize}
We have shown that $u\equiv0$ either $\Gl\not\in [-1,2]$ or $\Im {\Gl}\neq 0$, which is in contradiction with our assumption. Thus \eqnref{eq:thmainsp01} holds.
\section*{Acknowledgement}
The work of Y. Deng was supported by NSFC-RGC Joint Research Grant No. 12161160314 and NSF grant of China No. 11971487.
The work of X. Fang was supported by  NSF grant of China No. 72001077, Humanities and Social Sciences Foundation of the Ministry of Education No. 20YJC910005, NSF grant of Hunan No. 2021JJ30192.

\section*{Conflict of interest statement}
The authors declared that they have no conflicts of interest to this work.
We declare that we do not have any commercial or associative interest that represents a conflict of interest in connection with the work submitted.

\section*{Data availability statement}
This is a mathematical paper containing all the necessary theoretical proofs and numerical illustrations.
There are no data to be reported concerning this work.

\end{document}